\documentclass[graybox]{SNmult}

\usepackage{type1cm}        
\usepackage{makeidx}         
\usepackage{graphicx}        
\usepackage{multicol}        
\usepackage[bottom]{footmisc}

\usepackage{newtxtext}       %
\usepackage[varvw]{newtxmath}       

\usepackage{mathrsfs,mathtools}
\usepackage{tikz}
\usetikzlibrary{patterns}
\usepackage{psfrag}
\usepackage{pgfplots}
\usepackage{color}

\makeindex             

\let\epsilon\varepsilon
\newcommand{\tH}{\widetilde{H}}

\newcommand{\hPi}{\widehat{\Pi}}

\newcommand{\N}{\mathbb{N}}
\newcommand{\R}{\mathbb{R}}

\newcommand{\lay}{\mathcal{L}^L}

\newcommand{\cutoff}{g^L}

\newcommand{\hpartial}{\widehat{\partial}}
\newcommand{\hu}{\widehat{u}}

\newcommand{\Tg}{\mathcal{T}^{L}_{\mathrm{geo},\sigma}}

\newcommand{\xh}{{\widehat x}}
\newcommand{\yh}{{\widehat y}}
\newcommand{\zh}{{\widehat z}}

\newcommand{\Kh}{{\widehat  K}}

\newcommand{\skp}[1]{\left< #1 \right>}
\newcommand{\norm}[1]{\left\| #1 \right\|}

\newcommand{\abs}[1]{\lvert#1\rvert}

\newcommand{\bbQ}{{\mathbb Q}}
\newcommand{\bbP}{{\mathbb P}}
\newcommand{\bbR}{{\mathbb R}}

\newcommand{\bfe}{\mathbf{e}}
\newcommand{\bfv}{\mathbf{v}}
\newcommand{\bff}{\mathbf{f}}

\newcommand{\cV}{\mathcal{V}}
\newcommand{\cF}{\mathcal{F}}
\newcommand{\cE}{\mathcal{E}}

\newcommand{\VLq}{W^L_q}
\newcommand{\PiLq}{\Pi^L_q}

\newcommand{\omegaeps}{\xi}

\newcommand{\omegav}{\omega_{\mathbf{v}}}

\newcommand{\omegave}{\omega_{\mathbf{ve}}}
\newcommand{\omegavf}{\omega_{\mathbf{vf}}}
\newcommand{\omegavef}{\omega_{\mathbf{vef}}}
\newcommand{\omegaef}{\omega_{\mathbf{ef}}}

\newcommand{\omegae}{\omega_{\mathbf{e}}}
\newcommand{\omegaf}{\omega_{\mathbf{f}}}

\newcommand{\betam}{|\beta|}

\newcommand{\parperp}{\vDash}

\newcommand{\gpar}{\mathbf{g}_{\parallel}}
\newcommand{\gparperp}{\mathbf{g}_{\parperp}}
\newcommand{\gperp}{\mathbf{g}_{\perp}}

\newcommand{\betaperp}{\beta_{\perp}}

\newcommand{\betapar}{\beta_{\parallel}}

\newcommand{\betaparperp}{\beta_{\parperp}}

\newcommand{\rv}{r_\mathbf{v}}
\newcommand{\re}{r_\mathbf{e}}
\newcommand{\rf}{r_\mathbf{f}}
\newcommand{\rhove}{\rho_\mathbf{ve}}
\newcommand{\rhoef}{\rho_\mathbf{ef}}
\begin{document}
\title*{Exponential Convergence of $hp$-FEM \\
for the Integral Fractional Laplacian on cuboids}
\titlerunning{Exponential Convergence for Integral Fractional Laplacian}
\author{
Bj\"orn Bahr \and
Markus Faustmann \and 
Carlo Marcati \and \\
Jens Markus Melenk  \and
Christoph Schwab}

\institute{
B. Bahr, M. Faustmann, J.M.Melenk \at Institut  f\"{u}r  Analysis und  Scientific Computing\\ 
Technische Universit\"{a}t Wien\\ A-1040  Vienna, Austria \\ \email{bjoern.bahr@tuwien.ac.at, markus.faustmann@tuwien.ac.at, melenk@tuwien.ac.at}
\and
C. Marcati \at Institut Camille Jordan\\
Université de Lyon, Université Claude Bernard Lyon 1, CNRS UMR 5208\\
F-69622 Villeurbanne, France \\
\email{carlo.marcati@univ-lyon1.fr}
\and
Ch. Schwab \at Seminar for Applied Mathematics\\ ETH Z\"{u}rich, ETH Zentrum, 
HG  G57.1\\ CH8092 Z\"{u}rich, Switzerland \\ \email{christoph.schwab@sam.math.ethz.ch}
}

\maketitle
\abstract*{For the Dirichlet integral fractional Laplacian, 
we prove root exponential convergence of tensor-product $hp$-finite element approximations 
on $(0,1)^3$, 
for forcing $f$ that is analytic in $[0,1]^3$.
Exploiting analytic regularity estimates in weighted Sobolev spaces \cite{FMMS25-regularity3d},
we prove for $hp$-GLL interpolation approximations                      
with $N$ degrees of freedom the energy norm error bound $\lesssim \exp(-b\sqrt[6]{N})$. 
Tensor product mesh families which are geometrically refined towards all sides of $(0,1)^3$ 
are used. Numerical experiments with $hp$-Galerkin FEM confirm the bound.
\keywords{Fractional  Laplacian $\cdot$ $hp$-FEM $\cdot$ exponential convergence}}

\abstract{For the Dirichlet integral fractional Laplacian, 
we prove root exponential convergence of tensor-product $hp$-finite element approximations 
on $(0,1)^3$, 
for forcing $f$ that is analytic in $[0,1]^3$.
Exploiting analytic regularity estimates in weighted Sobolev spaces \cite{FMMS25-regularity3d},
we prove for $hp$-GLL interpolation approximations                      
with $N$ degrees of freedom the energy norm error bound $\lesssim \exp(-b\sqrt[6]{N})$. 
Tensor product mesh families which are geometrically refined towards all sides of $(0,1)^3$ 
are used. Numerical experiments with $hp$-Galerkin FEM confirm the bound.
\keywords{Fractional  Laplacian $\cdot$ $hp$-FEM $\cdot$ exponential convergence}}

\section{Introduction and Main Result}
\label{S:introduction}
%

Fractional differential operators and their 
applications in non-local physics, finance or biology attracted 
substantial attention in recent years \cite{BV16,LPGSGZMCMAK18}.

In order to derive meaningful simulations of the underlying fractional differential equations, 
such as the fractional Poisson problem $(-\Delta)^s u = f$, $0<s<1$,
the non-local and singular nature of these problems requires the 
use of efficient and accurate numerical methods. 
Among the various choices considered in literature, \cite{BBNOS18,GunbActa}, 
we consider the $hp$-version of the Finite Element Method (``$hp$-FEM''), 
which combines anisotropic, geometrically boundary refined partitions 
with higher order polynomials. 
Exponential convergence of $hp$-FEM applied to the 
fractional Poisson problem has been observed in our previous works 
in space dimensions $d=1$, $2$, \cite{BFMMS-hp1d,FMMS23-hp2d}. 
The aim of this article is to extend this analysis to the more challenging case 
of $d=3$ spatial dimensions; 
we restrict the computational domains to cuboids in order to simplify the presented proofs.
\subsection{Integral Fractional Diffusion}
\label{sec:FracDiff}

Let $\Omega := (0,1)^3 \subset \bbR^3$ be the unit cube. 
Given $f\in L^2(\Omega)$,
we consider fractional differential equations of the form:
Find $u:\bbR^3\to \bbR$ such that
\begin{equation}\label{eq:FracLap}
(-\Delta)^s u = f \quad \mbox{in}\quad \Omega, 
\qquad 
u = 0 \quad \mbox{in} \;\;\Omega^c := \bbR^3\backslash \overline{\Omega}
.
\end{equation}
Here, the Dirichlet integral fractional Laplacian $(-\Delta)^s$ is, for $0<s<1$, 
given by
\begin{equation}\label{eq:IntFrc}
(-\Delta)^s u(x) := C(s) \mbox{P.V.} \int_{\bbR^3} \frac{u(x)-u(z)}{|x-z|^{3+2s}} dz 
\;,
\quad 
C(s)\coloneqq - 2^{2s}\frac{\Gamma(s+3/2)}{\pi^{3/2}\Gamma(-s)},
\end{equation}
where $\mbox{P.V.}$ denotes the Cauchy principal value integral and $\Gamma(\cdot)$ is the gamma-function.
Suitable function spaces for fractional differential equations are fractional Sobolev spaces, 
defined by means of the Sobolev-Slobodeckij seminorm. For $t>0$ and $\omega \subset \bbR^3$, 
this reads
\begin{align}\label{eq:FracNrm}
|v|^2_{H^t(\omega)}
=
\int_{\omega} \int_{\omega} \frac{|v(x) - v(z)|^2}{\abs{x-z}^{3+2t}}
\,dz\,dx,
\;\;
\|v\|^2_{H^t(\omega)} = \|v\|^2_{L^2(\omega)} + |v|^2_{H^t(\omega)}.
\end{align}
In order to incorporate the exterior Dirichlet boundary condition, 
for $t \in (0,1)$, we employ the spaces
\begin{align} \label{eq:Htilde}
\widetilde{H}^{t}(\Omega) 
:= 
\left\{u \in H^t(\R^3) : u\equiv 0 \; \text{on} \; \R^3 \backslash \overline{\Omega} \right\},
\;  
\norm{v}_{\widetilde{H}^{t}(\Omega)}^2 
:= 
\norm{v}_{H^t(\Omega)}^2 + \norm{v/r_{\partial\Omega}^t}_{L^2(\Omega)}^2.
\end{align}
Here and throughout, $r_{\partial \Omega}(x):=\operatorname{dist}(x,\partial\Omega)$
denotes the Euclidean distance of a point $x \in \Omega$ from the boundary $\partial \Omega$.
For $t > 0$, the space $H^{-t}(\Omega)$ denotes the dual space of $\widetilde{H}^t(\Omega)$,
and $\skp{\cdot,\cdot}_{L^2(\Omega)}$ denotes 
the duality pairing that extends the $L^2(\Omega)$-inner product.

The variational form of \eqref{eq:FracLap} 
reads: find $u \in \widetilde{H}^s(\Omega)$ such that, 
for all $v \in \widetilde{H}^s(\Omega)$,
\begin{equation}
\label{eq:weakform}
a(u,v):= \frac{C(s)}{2} \int_{\R^3}\int_{\R^3}
 \frac{(u(x)-u(z))(v(x)-v(z))}{\abs{x-z}^{3+2s}} \, dz \, dx = \skp{f,v}_{L^2(\Omega)}.
\end{equation}
Existence and uniqueness of $u \in \widetilde{H}^s(\Omega)$ follow from
the Lax--Milgram Lemma for any $f \in H^{-s}(\Omega)$, upon the observation
that the bilinear form 
$a(\cdot,\cdot): \widetilde{H}^s(\Omega)\times \widetilde{H}^s(\Omega)\to \R$
is continuous and coercive,
see, e.g., \cite[Sec.~{2.1}]{acosta-borthagaray17}.

\subsection{$hp$-FEM discretization on geometrically refined meshes}
\label{sec:mesh}

As we are in the setting of the Lax-Milgram lemma, we have, for each closed subspace 
$V_N\subset \widetilde{H}^s(\Omega)$, 
unique solvability of the variational formulation: Find 
\begin{align}\label{eq:GalV}
u_N\in V_N \; \mbox{ such that }\;
a(u_N,v) = \skp{f,v}_{L^2(\Omega)} \quad \forall v\in V_N.
\end{align}
The C\'ea-Lemma gives quasi-optimality of the Galerkin approximation
\begin{align}\label{eq:QuasiOpt}
\forall v_N \in V_N:\quad 
\| u - u_N \|_{\widetilde{H}^s(\Omega)} 
\leq C 
\| u - v_N \|_{\widetilde{H}^s(\Omega)} 
\;.
\end{align}

The aim of this work is to construct a FEM space $V_N$ such that the best-approximation error converges (root) exponentially in the dimension of the space. 
In particular, we take a space of piecewise polynomials on certain geometrically refined meshes constructed in the following. 

Given a geometric mesh grading parameter $\sigma \in (0,1)$, 
we consider tensor product meshes that are geometrically refined towards all faces 
(see Figure~\ref{fig:mesh_refinement}). 
These meshes are constructed by tensorization of geometric partitions in $(0,1)$
with nodes
$$
x_0 = 0, \; x_i = \frac{\sigma^{L-i}}{2}, i = 1,\dots,L,\quad x_{i} = 1-\frac{\sigma^{i-L}}{2}, i=L+1,\dots,2L, \; x_{2L+1} = 1.
$$ 
Here, $L \geq 1$ denotes the number of layers of refinement towards the boundary points.

\begin{figure}[ht]
\begin{center}
\begin{tikzpicture}[scale=0.35]
\draw[ultra thick] (0,0)--(8,0); 
\draw[ultra thick] (0,8)--(8,8);
\draw[ultra thick] (8,0) -- (8,8); 
\draw[ultra thick] (0,0) -- (0,8); 

\draw[ultra thick] (0,8) -- (4,12); 
\draw[ultra thick] (8,8) -- (12,12); 
\draw[ultra thick] (8,0) -- (12,4); 
\draw[ultra thick] (12,4) -- (12,12); 
\draw[ultra thick] (4,12) -- (12,12); 
\draw (0.25,0) -- (0.25,8); 
\draw (0.5,0) -- (0.5,8); 
\draw (1,0) -- (1,8); 
\draw (2,0) -- (2,8); 
\draw (4,0) -- (4,8); 
\draw (6,0) -- (6,8); 
\draw (7,0) -- (7,8); 
\draw (7.5,0) -- (7.5,8); 
\draw (7.75,0) -- (7.75,8); 
\draw (0,0.25) -- (8,0.25);
\draw (0,0.5) -- (8,0.5); 
\draw (0,1) -- (8,1); 
\draw (0,2) -- (8,2); 
\draw (0,4) -- (8,4); 
\draw (0,6) -- (8,6); 
\draw (0,7) -- (8,7); 
\draw (0,7.5) -- (8,7.5); 
\draw (0,7.75) -- (8,7.75);  
\draw (8.125,0.125) -- (8.125,8.125); 
\draw (8.25,0.25) -- (8.25,8.25); 
\draw (8.5,0.5) -- (8.5,8.5); 
\draw (9,1) -- (9,9); 
\draw (10,2) -- (10,10); 
\draw (11,3) -- (11,11); 
\draw (11.5,3.5) -- (11.5,11.5); 
\draw (11.75,3.75) -- (11.75,11.75); 
\draw (11.875,3.875) -- (11.875,11.875); 
\draw (8,0.25) -- (12,4.25);
\draw (8,0.5) -- (12,4.5); 
\draw (8,1) -- (12,5); 
\draw (8,2) -- (12,6); 
\draw (8,4) -- (12,8); 
\draw (8,6) -- (12,10); 
\draw (8,7) -- (12,11); 
\draw (8,7.5) -- (12,11.5); 
\draw (8,7.75) -- (12,11.75); 
\draw (0.25,8) -- (4.25,12); 
\draw (0.5,8) -- (4.5,12); 
\draw (1,8) -- (5,12); 
\draw (2,8) -- (6,12); 
\draw (4,8) -- (8,12); 
\draw (6,8) -- (10,12); 
\draw (7,8) -- (11,12); 
\draw (7.5,8) -- (11.5,12); 
\draw (7.75,8) -- (11.75,12); 
\draw (0.125,8.125) -- (8.125,8.125);
\draw (0.25,8.25) -- (8.25,8.25); 
\draw (0.5,8.5) -- (8.5,8.5); 
\draw (1,9) -- (9,9); 
\draw (2,10) -- (10,10); 
\draw (3,11) -- (11,11); 
\draw (3.5,11.5) -- (11.5,11.5); 
\draw (3.75,11.75) -- (11.75,11.75); 
\draw (3.875,11.875) -- (11.875,11.875); 
\end{tikzpicture}
\caption{\label{fig:mesh_refinement} Geometric mesh $\Tg$ with $\sigma=1/2$ and $L=4$.}
\end{center}
\end{figure}

Let $\widehat K:= (0,1)^3$ be the reference cube. 
For each (axisparallel) hexahedron $K \in \Tg$, 
the element transformation $F_K : \widehat K \rightarrow K$ 
is affine, with diagonal Jacobian.

For a function $u:\Omega \rightarrow \R$ 
and for $K\in \Tg$, 
we denote by  ${\widehat u}_K : \widehat K \rightarrow \R$
the pull-back of $u$ via $F_K$ to the reference element, 
i.e.,
\begin{equation}\label{eq:hatu}
{\widehat u}_K := u|_K \circ F_K.
\end{equation}

On the geometric mesh $\Tg$,
we consider Lagrangian finite elements of uniform polynomial degree $q\geq 1$, 
i.e. the spaces $\VLq\coloneqq \mathcal{S}^q_0(\Omega,\Tg)$, where
\begin{equation}
\label{eq:S^q_0}
\mathcal{S}^q_0(\Omega,\Tg)
\coloneqq 
\left \{ v \in C(\overline{\Omega}): {\widehat v}_K \in \mathbb{Q}_{q}(\widehat K) 
\quad 
\forall K \in \Tg, \ v|_{\partial \Omega} = 0 \right \}.
\end{equation}
Here, $\mathbb{Q}_{q}(\widehat K) := (\mathbb{P}_q)^{\otimes 3}$
and 
$\operatorname*{dim}W^L_q = O(q^3 L^3)$.
\subsection{Main Result}
\label{S:contrib}
%
\begin{theorem}\label{thm:hpExpConv}
Let $\Omega := (0,1)^3$, $f$ be analytic in  $\overline{\Omega}$ and $u$ solve \eqref{eq:weakform}. 
Then, 
the Galerkin approximations $u_N \in V_N := W_q^L$ of \eqref{eq:GalV}  with $W^L_q$ defined in \eqref{eq:S^q_0} with $L\sim q \sim N^{1/6}$
converge exponentially to $u$, i.e., there 
are constants $b$, $C >0$ (depending on $\Omega$, $f$, and $s$) 
such that
\begin{equation}\label{eq:hpExpConv}
\| u - u_N \|_{\tH^s(\Omega)} \leq C \exp(-b \sqrt[6]{N}).
\end{equation}
%
%
\end{theorem}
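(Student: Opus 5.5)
By the C\'ea lemma \eqref{eq:QuasiOpt}, it suffices to construct one $v_N \in W^L_q$ with $\|u - v_N\|_{\tH^s(\Omega)} \lesssim \exp(-bq) + \exp(-bL)$. With $L \sim q$ and $N = \dim W^L_q \sim q^3L^3 \sim q^6$, this is $\exp(-b\sqrt[6]{N})$. We take $v_N := \Pi^L_q u$, a tensor-product $hp$-interpolant built from the one-dimensional Gauss--Lobatto (GLL) interpolant of degree $q$ on each axis and applied elementwise. Because we interpolate at the GLL points, $v_N$ is continuous. On elements touching $\partial\Omega$ we modify the construction so that it vanishes on $\partial\Omega$. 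Concretely, in the boundary layer of elements abutting a face, edge or vertex, we replace the interpolant by $0$, or by the interpolant of $u$ times a cutoff in the touching directions, so that $v_N$ lies in $\tH^s(\Omega)$.

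The input is the analytic regularity in weighted Sobolev spaces from \cite{FMMS25-regularity3d}. It says that $u$ belongs to a countably normed class: derivatives $\partial^\beta u$ are bounded in $L^2$ with weights $r_{\mathbf v}, r_{\mathbf e}, r_{\mathbf f}$ and the ratios $\rho_{\mathbf{ve}},\rho_{\mathbf{ef}}$. The estimates are anisotropic, split into derivatives parallel and perpendicular to edges and faces, and carry constants $C\gamma^{|\beta|}|\beta|!$. Near a face the solution behaves like $r_{\mathbf f}^{s}$, so perpendicular derivatives lose powers of $r_{\mathbf f}$ while tangential ones stay analytic.

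\textbf{Step 1 (elementwise interpolation error).} Fix an interior element $K$, meaning one not touching $\partial\Omega$, at distance $\sim \sigma^{\ell}$ from the nearest face, edge or vertex. Pull $u$ back via $F_K$ to $\widehat u_K$ and use the weighted analytic bounds. The scaled derivatives then satisfy $\|\hpartial^\beta \widehat u_K\|_{L^2(\Kh)} \le C\gamma^{|\beta|}|\beta|!\,|K|^{1/2}\sigma^{\ell(s-\ldots)}$. The anisotropic scaling of geometric elements makes the weight factors exactly compensate the Jacobian. Standard one-dimensional GLL estimates for analytic functions, tensorized, give $\|\widehat u_K - \Pi_q\widehat u_K\|_{H^1(\Kh)} \lesssim e^{-bq}$ times the appropriate element scale. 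Summing over the $O(L^3)$ elements costs only a polynomial factor in $L$, which is absorbed by the exponential.

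\textbf{Step 2 (boundary layer).} On elements touching $\partial\Omega$ the volume is $O(\sigma^L)$ in the perpendicular direction. Weighted estimates such as $u \sim r^{s}$ near the boundary, together with $L^\infty$ bounds on the interpolant, show that the error there is $\lesssim \sigma^{L\theta}$ for some $\theta>0$. This uses both the $L^2$ part and the weighted $\|\cdot/r_{\partial\Omega}^s\|$ part of the norm \eqref{eq:Htilde}.

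\textbf{Step 3 (global $\tH^s$ norm) — the main obstacle.} The $\tH^s(\Omega)$ norm is nonlocal, so elementwise $L^2$ and $H^1$ bounds must be glued together. For $e = u - v_N$, first apply interpolation $\|e\|_{\tH^s} \lesssim \|e\|_{L^2}^{1-s}\|e\|_{H^1_0}^s$, provided $e\in H^1_0$. Since $u\notin H^1$ near the boundary when $s\le 1/2$, we cannot do this globally. Instead we use a localized fractional norm estimate of the form
\[
|e|^2_{H^s(\Omega)} \lesssim \sum_K \Bigl( h_K^{-2s}\|e\|^2_{L^2(K)} + h_K^{2-2s}|e|^2_{H^1(K)} \Bigr)
\]
over patches of the geometric mesh, plus a Hardy-type treatment of the boundary-layer elements. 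This is where the anisotropic shape of the elements near edges and vertices makes the bookkeeping delicate. Combining the three steps gives the bound $\exp(-bq)+\sigma^{\theta L}$, and choosing $L\sim q$ proves \eqref{eq:hpExpConv}.
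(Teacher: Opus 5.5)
Your overall architecture matches the paper's: C\'ea, elementwise tensor GLL interpolation with anisotropic scaling against the weighted analytic bounds, a small boundary layer, and $L\sim q$. However, the step you flag as the main obstacle is where the proof actually lives, and what you propose there does not work as stated.

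\textbf{The gap: localizing the nonlocal norm.} A Faermann-type patch localization $|e|^2_{H^s(\Omega)}\lesssim\sum_K\bigl(h_K^{-2s}\|e\|^2_{L^2(K)}+h_K^{2-2s}|e|^2_{H^1(K)}\bigr)$ is a tool for shape-regular meshes, and its constants depend on the shape-regularity. On $\Tg$ the elements near faces and edges have aspect ratios of order $\sigma^{-L}$. So either the constant degenerates with $L$, which can eat the exponential rate, or you must specify which single length $h_K$ you mean on an $h_\perp\times h_\parperp\times h_\parallel$ box and prove the inequality for it. Your proposal does neither.

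Step~2 inherits the same problem. The boundary-layer error cannot be controlled through the $L^2$ and $\|\cdot/r_{\partial\Omega}^s\|$ parts of \eqref{eq:Htilde} alone, because the $H^s$ seminorm couples the boundary layer to the rest of $\Omega$. Separately, replacing the interpolant by $0$ on $\lay_0$ destroys continuity.

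\textbf{The missing idea: a weighted, mesh-independent embedding.} What you need is \eqref{eq:LocNrm}, $\|v\|_{\tH^s(\Omega)}\le C\|v\|_{H^1_\mu(\Omega)}$ for $\mu\in[0,1-s)$, whose weights are powers of $r_{\partial\Omega}$ rather than of a mesh size. Since $r_{\partial\Omega}\simeq h_{\perp,K}$ on every $K\notin\lay_0$, this gives exactly your localized sum with $h_K:=h_{\perp,K}$ and $2-2s$ replaced by $2\mu$. It carries no shape-regularity constant, and the anisotropic scaling \eqref{eq:aniso_scaling} then becomes routine.

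\textbf{How the paper handles the boundary layer.} It takes $v_N=\cutoff\,\Pi^L_{q-1}u$ with the piecewise trilinear cutoff $\cutoff$, so that $v_N\in\VLq$. It then uses the cutoff bounds \eqref{eq:est-cutoff} in $H^1_\mu$ and Lemma~\ref{lemma:u-gu}. There one needs $\mu>1/2-s$ together with some $t<1/2$ such that $\mu+t+s-1>0$, which produces the factor $\sigma^{L(\mu+t+s-1)}$.

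\textbf{Exponent bookkeeping and a missing case.} The same condition $\mu+t+s>1$ is what makes $h_\perp^{2\mu+2t+2s}(h_\parallel^{-2}+h_\parperp^{-2}+h_\perp^{-2})\le h_\perp^{\delta}$ in the final summation. This is precisely the exponent bookkeeping left open in your Step~1 (the factor $\sigma^{\ell(s-\ldots)}$ must beat the $h_\perp^{-2}$ coming from the perpendicular derivative). You also do not treat elements that do not lie inside a single neighbourhood $\omega_\bullet$; the paper handles these by enlarging $\xi$.
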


The remainder of this note is a 
proof of Theorem~\ref{thm:hpExpConv}, which will be given in Section~\ref{sec:proof}. 
Its main ingredient are suitable regularity estimates provided by our previous work, \cite{FMMS25-regularity3d}, which are summarized in Section~\ref{sec:regularity}.
Finally, a numerical example validates the theoretical statement of our main result in Section~\ref{sec:numerics}.
\bigskip

\textbf{Novelty:} The present article is the first contribution in literature that provides a proven (root) exponential error bound for approximations to the Dirichlet integral fractional Laplacian in three dimensions with analytic forcing. Moreover, the numerical example in three dimensions is to our knowledge the first implementation in three dimensions that realizes an exponential convergent $hp$-FEM.
\section{Weighted Analytic Regularity}
\label{sec:regularity}

The key to deriving the exponential convergence rate for $hp$-FEM discretization are regularity results for the exact weak solution. Here, we consider the case of analytic regularity in weighted Sobolev spaces as shown in our previous work \cite{FMMS25-regularity3d}. The idea therein is to mitigate the regularity loss close to $\partial \Omega$ by suitable multiplication with powers of the distance function $r_{\partial\Omega}(\cdot)$. 

In this section, we briefly state the regularity results from \cite{FMMS25-regularity3d}. An important tool hereby is to make use of a decomposition of the domain w.r.t. the geometric situation of singular sets (vertices, edges, faces) meeting each other. 

\subsection{Partition of $\Omega$}
\label{sec:PartOmega}
For each vertex $\bfv \in \cV$, 
we denote by 
$\cE_{\bfv}\coloneqq \{\bfe \in \cE: \bfv \in \overline{\bfe}\}$
the set of all edges that meet at $\bfv$ and by 
$\cF_{\bfv}\coloneqq \{\bff \in \cF: 
 \overline{\bff} \cap \bfv \ne \emptyset\}$
the set of all faces abutting the vertex $\bfv$.
For any edge $\bfe \in \cE$, 
we denote by 
$\cV_{\bfe}\coloneqq \{\bfv \in \cV:  \bfv \in \overline{\bfe}\}  =  \partial {\bfe}$ 
the endpoints of edge $\bfe$ and by 
$\cF_{\bfe}\coloneqq \{\bff \in \cF: \overline{\bff} \cap \bfe \ne \emptyset\}$ 
the set of faces sharing the edge $\bfe$.
For any face $\bff\in\cF$,
$\cE_{\bff} \coloneqq \{\bfe \in \cE\,:\, \bfe \subset \partial{\bff} \}$ 
is the set of edges abutting the face $\bff$, and 
$\cV_{\bff} \coloneqq \{\bfv \in \cV\,:\, \bfv \in \overline{\bff} \}$ is
the set of vertices contained in the face $\overline{\bff}$.
\bigskip

For 
$\bfv \in \cV$, 
$\bfe \in \cE$, and
$\bff \in \cF$,
we shall require the distance functions 
\begin{align*} 
  \rv(x)\coloneqq|x - \bfv|, 
  \qquad 
  \re(x)\coloneqq\inf_{y \in \bfe} |x - y|, 
  \qquad
  \rf(x)\coloneqq\inf_{y \in \bff} |x - y|,
  \quad 
  x\in \Omega,
\end{align*}
and corresponding (nondimensional) relative distances
\begin{align*}
  \rhove(x)\coloneqq \re(x)/\rv(x),
  \qquad 
  \rhoef(x)\coloneqq \rf(x)/\re(x).
\end{align*} 

We start with a decomposition of $\Omega$ into (possibly overlapping) sectorial neighborhoods of vertices $\mathbf{v}$, edges $\mathbf{e}$
and faces $\mathbf{f}$ and an interior $\Omega_{\rm int}$. Each sectorial neighborhood my be further divided, if a second or third type of singular set is contained in the neighboorhood. 
More precisely, we write
\begin{align}\label{eq:Nghbrhoods}
 \Omega = \Omega_{\rm int} \cup \bigcup_{\mathbf{v} \in \mathcal{V}}\left( \omega_{\mathbf{v}} 
           \cup \bigcup_{\mathbf{e} \in \mathcal{E}_{\mathbf{v}}, \; \mathbf{f} \in \mathcal{F}_{\mathbf{v}}} 
           \omega_{\mathbf{ve}} \cup \omega_{\mathbf{vf}} \cup \omegavef \right) 
           \cup \bigcup_{\mathbf{e} \in \mathcal{E}} \left( \omega_{\mathbf{e}} 
           \cup \bigcup_{\mathbf{f} \in \mathcal{F}_{\mathbf{e}}} \omega_{\mathbf{ef}} \right)
           \cup \bigcup_{\mathbf{f} \in \mathcal{F}} \omega_{\mathbf{f}}
\;.
\end{align}
Here, for fixed, sufficiently small $\omegaeps > 0$ and for 
$\bfv\in\cV$, $\bfe\in\cE$, $\bff\in\cF$,
the various neighborhoods are given by
  \begin{align*}
    \omegavef^{\omegaeps} & \coloneqq \{x \in \Omega\,:\, \rv(x) < \omegaeps \quad \wedge \quad \rhove(x) < \omegaeps \quad \wedge \quad \rhoef(x) < \omegaeps \},
    \\
    \omegave^{\omegaeps} &\coloneqq \{x \in \Omega\,:\, \rv(x) < \omegaeps \quad \wedge \quad \rhove(x) < \omegaeps \quad \wedge \quad \rhoef(x) \geq \omegaeps \quad \forall \bff \in \mathcal{F}_{\bfe} 
    \},
    \\
    \omegavf^{\omegaeps} &\coloneqq \{x \in \Omega\,:\, \rv(x) < \omegaeps \quad \wedge \quad \rhove(x) \geq \omegaeps \quad \wedge \quad \rhoef(x) < \omegaeps \quad \forall \bfe \in \mathcal{E}_{\bfv} \cap \mathcal{E}_{\bff} \},
    \\
   \omegav^{\omegaeps} & \coloneqq \{x \in \Omega\,:\, \rv(x) < \omegaeps \quad \wedge \quad \rhove(x) \geq \omegaeps \quad \wedge \quad \rhoef(x) \geq \omegaeps \quad \forall \bfe \in \mathcal{E}_{\bfv},\; \bff \in \mathcal{F}_{\bfv} \}, 
    \\
    \omegaef^{\omegaeps} &\coloneqq \{x \in \Omega\,:\, \rv(x) \geq \omegaeps \quad \wedge \quad \re(x) < \omegaeps^2 \quad \wedge \quad \rhoef(x) < \omegaeps \quad \forall \bfv \in \mathcal{V}_{\bfe} \},
    \\
    \omegae^{\omegaeps} &\coloneqq \{x \in \Omega\,:\, \rv(x) \geq \omegaeps \quad \wedge \quad \re(x) < \omegaeps^2 \quad \wedge \quad \rhoef(x) \geq \omegaeps \quad \forall \bfv \in \mathcal{V}_{\bfe},\; \bff\in \mathcal{F}_{\bfe} \},
\\
    \omegaf^{\omegaeps} &\coloneqq \{x \in \Omega\,:\, \rv(x) \geq \omegaeps \quad \wedge \quad \re(x) \geq \omegaeps^2 \quad \wedge \quad \rf(x) < \omegaeps^3 \quad \forall \bfv \in \mathcal{V}_{\bff}, \; \bfe \in \mathcal{E}_{\bff} \},
    \\
    \Omega_{\mathrm{int}}^{\omegaeps} 
    &\coloneqq \{x \in \Omega\,:\, \rv(x) \geq \omegaeps \quad \wedge \quad \re(x) \geq \omegaeps^2 \quad \wedge \quad 
                                   \rf(x) \geq \omegaeps^3 \quad \forall \bfv, \bfe, \bff \}.
  \end{align*}

\begin{figure}[ht]
\begin{center}
\begin{tikzpicture}[scale=1.2]
  \def\R{3}
  \def\A{90}
\draw ({3/4*\R*cos(\A)-.1},{3/4*\R*sin(\A)}) node[above]{$\mathbf{f}'$};
\draw (0,0) node {\textbullet} node[left] {$\mathbf{e}/\mathbf{v}$}; 
  \draw[-] (0, 0) -- ({\R*cos(\A)}, {\R*sin(\A)});
  \draw[-] (0, 0) -- (\R, 0); 
  \draw[dashed] (0, 0) -- ({2/3*\R*cos(\A/3)},  {2/3*\R*sin(\A/3)}); 
  \draw[dashed] (0, 0) -- ({2/3*\R*cos(2*\A/3)},{2/3*\R*sin(2*\A/3)}); 
  \draw[dashed] ({2/3*\R*cos(\A/3)}, {2/3*\R*sin(\A/3)}) -- (\R,{2/3*\R*sin(\A/3)}); 
  \draw[dashed] %
  ({2/3*\R*cos(2*\A/3)}, {2/3*\R*sin(2*\A/3)}) %
  -- ({2/3*\R*cos(2*\A/3) + \R*(1-2/3*cos(\A/3))*cos(\A)}, {2/3*\R*sin(2*\A/3) + \R*(1-2/3*cos(\A/3))*sin(\A)});
\draw (3/4*\R, 0) node [below]{$\mathbf{f}$} ;
\draw (3/8*\R ,0.05) node [above]{$\omega_{\mathbf{v}\mathbf{e}\mathbf{f}}$} ;
\draw (7/8*\R, 0.15) node [above]{$\omega_{\mathbf{v}\mathbf{f}}$} ;
\draw ({\R*cos(\A/2)}, {\R*sin(\A/2)}) node {$\omega_{\mathbf{v}}$} ;
\draw ({3/8*\R*cos(\A/2)},{3/8*\R*sin(\A/2)-0.12}) node [above]{$\omega_{\mathbf{v}\mathbf{e}}$}; 
\draw ({3/8*\R*cos(3*\A/4)},{3/8*\R*sin(3*\A/4)}) node [above]{$\omega_{\mathbf{v}\mathbf{e}\mathbf{f}'}$} ;
\draw ({7/8*\R*cos(3*\A/4)-0.25},{7/8*\R*sin(3*\A/4)}) node [above]{$\omega_{\mathbf{v}\mathbf{f}'}$} ;
\draw [dashed,domain=0:\A] plot ({2/3*\R*cos(\x)}, {2/3*\R*sin(\x)});
\end{tikzpicture}%
\hspace{8mm}
\begin{tikzpicture}[scale=0.7]
  \def\R{3}
  \def\A{60}
\draw[fill=magenta!10,domain=\A/2:\A] plot ({2/3*\R*cos(\x)}, {2/3*\R*sin(\x)}) -- ({\R*cos(\A)}, {\R*sin(\A)}) -- ({2/3*\R*cos(\A/2) + \R*(1-2/3*cos(\A/2))*cos(\A)}, {2/3*\R*sin(\A/2) + \R*(1-2/3*cos(\A/2))*sin(\A)}) -- cycle;
\draw[fill=magenta!10]  ({2/3*\R*cos(\A)}, {2/3*\R*sin(\A)}) -- ({\R*cos(\A)}, {\R*sin(\A)}) -- ({-\R}, {0}) -- cycle;

\draw[fill=magenta!10,domain=\A/2:\A] plot ({2/3*\R*cos(\x)}, {-2/3*\R*sin(\x)}) -- ({\R*cos(\A)}, {-\R*sin(\A)}) -- ({2/3*\R*cos(\A/2) + \R*(1-2/3*cos(\A/2))*cos(\A)}, {-2/3*\R*sin(\A/2) - \R*(1-2/3*cos(\A/2))*sin(\A)}) -- cycle;
\draw[fill=magenta!10]  ({2/3*\R*cos(\A)}, {-2/3*\R*sin(\A)}) -- ({\R*cos(\A)}, {-\R*sin(\A)}) -- ({-\R}, {0}) -- cycle;

\draw[fill=cyan!10,domain=-\A/2:\A/2] plot ({2/3*\R*cos(\x)}, {2/3*\R*sin(\x)}) -- (0,0) -- ({2/3*\R*cos(-\A/2)}, {2/3*\R*sin(-\A/2)});

\draw[fill=blue!10,domain=\A/2:\A] plot ({2/3*\R*cos(\x)}, {2/3*\R*sin(\x)}) -- (0,0) -- ({2/3*\R*cos(\A/2)}, {2/3*\R*sin(\A/2)});
\draw[fill=blue!10]  ({2/3*\R*cos(\A)}, {2/3*\R*sin(\A)}) -- (0,0) -- (-\R,0) -- ({-\R}, {0}) -- cycle;

\draw[fill=blue!10,domain=\A/2:\A] plot ({2/3*\R*cos(\x)}, {-2/3*\R*sin(\x)}) -- (0,0) -- ({2/3*\R*cos(\A/2)}, {-2/3*\R*sin(\A/2)});
\draw[fill=blue!10]  ({2/3*\R*cos(\A)}, {-2/3*\R*sin(\A)}) -- (0,0) -- (-\R,0) -- cycle;

\draw[dashed] ({-\R}, {0}) -- ({2/3*\R*cos(\A/2)}, {2/3*\R*sin(\A/2)});
\draw[dashed] ({-\R}, {0})  -- ({2/3*\R*cos(-\A/2)}, {2/3*\R*sin(-\A/2)});

\draw[white,thick] ({2/3*\R*cos(\A/2) + \R*(1-2/3*cos(\A/2))*cos(\A)}, {2/3*\R*sin(\A/2) + \R*(1-2/3*cos(\A/2))*sin(\A)}) -- ({\R*cos(\A)}, {\R*sin(\A)}) -- ({\R*cos(\A)-\R}, {\R*sin(\A)})--({-\R}, {0}) -- ({\R*cos(\A)-\R}, {-\R*sin(\A)}) -- ({\R*cos(\A)}, {-\R*sin(\A)})--({2/3*\R*cos(\A/2) + \R*(1-2/3*cos(\A/2))*cos(\A)}, {-2/3*\R*sin(\A/2) - \R*(1-2/3*cos(\A/2))*sin(\A)});

\draw[] (-\R/2+0.2,0) node[below] {$\mathbf{e}$};
\draw[] (-\R/4,-2*\R/3+0.2) node[below] {$\mathbf{f}$};
\draw[] (-\R/4,2*\R/3) node[below] {$\mathbf{f}'$};
\draw[line width=1.5] (-\R,0) -- (0,0);
\draw[line width=1.5] (-\R,0) -- ({\R*cos(\A)-\R}, {\R*sin(\A)});
\draw[line width=1.5] (-\R,0) -- ({\R*cos(\A)-\R}, {-\R*sin(\A)});

\draw[cyan] (\R/3+0.2,0) node[] {$\omega_{\mathbf{ve}}$};
\draw[blue] (\R/4-0.1,\R/4+0.3) node[right] {$\omega_{\mathbf{vef'}}$};
\draw[blue] (\R/4-0.1,-\R/4-0.3) node[right] {$\omega_{\mathbf{vef}}$};

\draw[magenta] (\R/2-0.2,\R/2+0.2) node[right] {$\omega_{\mathbf{vf'}}$};
\draw[magenta] (\R/2-0.2,-\R/2-0.2) node[right] {$\omega_{\mathbf{vf}}$};
\draw (\R-0.5,0)  node {$\omega_{\mathbf{v}}$} ;
\end{tikzpicture}
\caption{\label{fig:vertex-notation} 
Notation near a vertex $\mathbf{v}$. Left: top view of the vertex cone. Right: side view of the vertex cone.}
\end{center}
\end{figure}
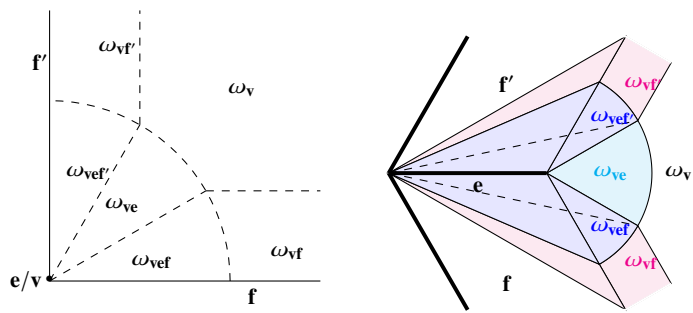

\begin{figure}
\begin{center}
\begin{tikzpicture}[scale=1.2]
  \def\R{3}
  \def\A{90}
\draw ({3/4*\R*cos(\A)-.1},{3/4*\R*sin(\A)}) node[above]{$\mathbf{f}'$};
\draw (0,0) node {\textbullet} node[left] {$\mathbf{e}$}; 
  \draw[-] (0, 0) -- ({\R*cos(\A)}, {\R*sin(\A)});
  \draw[-] (0, 0) -- (\R, 0); 
  \draw[dashed] (0, 0) -- ({2/3*\R*cos(\A/3)},  {2/3*\R*sin(\A/3)}); 
  \draw[dashed] (0, 0) -- ({2/3*\R*cos(2*\A/3)},{2/3*\R*sin(2*\A/3)}); 
  \draw[dashed] ({2/3*\R*cos(\A/3)}, {2/3*\R*sin(\A/3)}) -- (\R,{2/3*\R*sin(\A/3)}); 
  \draw[dashed] %
  ({2/3*\R*cos(2*\A/3)}, {2/3*\R*sin(2*\A/3)}) %
  -- ({2/3*\R*cos(2*\A/3) + \R*(1-2/3*cos(\A/3))*cos(\A)}, {2/3*\R*sin(2*\A/3) + \R*(1-2/3*cos(\A/3))*sin(\A)});
\draw (3/4*\R, 0) node [below]{$\mathbf{f}$} ;
\draw (3/8*\R ,0.05) node [above]{$\omega_{\mathbf{e}\mathbf{f}}$} ;
\draw (7/8*\R, 0.15) node [above]{$\omega_{\mathbf{f}}$} ;
\draw ({\R*cos(\A/2)}, {\R*sin(\A/2)}) node {$\Omega_{\mathrm{int}}$} ;
\draw ({3/8*\R*cos(\A/2)},{3/8*\R*sin(\A/2)-0.12}) node [above]{$\omega_{\mathbf{e}}$}; 
\draw ({3/8*\R*cos(3*\A/4)},{3/8*\R*sin(3*\A/4)}) node [above]{$\omega_{\mathbf{e}\mathbf{f}'}$} ;
\draw ({7/8*\R*cos(3*\A/4)-0.25},{7/8*\R*sin(3*\A/4)}) node [above]{$\omega_{\mathbf{f}'}$} ;
\draw [dashed,domain=0:\A] plot ({2/3*\R*cos(\x)}, {2/3*\R*sin(\x)});
\end{tikzpicture}%
 \hspace{8mm}
\begin{tikzpicture}[scale=0.7]
  \def\R{3}
  \def\A{60}
\draw[fill=magenta!10,domain=\A/2:\A] plot ({2/3*\R*cos(\x)}, {2/3*\R*sin(\x)}) -- ({\R*cos(\A)}, {\R*sin(\A)}) -- ({2/3*\R*cos(\A/2) + \R*(1-2/3*cos(\A/2))*cos(\A)}, {2/3*\R*sin(\A/2) + \R*(1-2/3*cos(\A/2))*sin(\A)}) -- cycle;
\draw[fill=magenta!10,domain=\A/2:\A] plot ({2/3*\R*cos(\x)}, {2/3*\R*sin(\x)}) -- (0,0) -- ({2/3*\R*cos(\A/2)}, {2/3*\R*sin(\A/2)});
\draw[fill=magenta!10]  ({2/3*\R*cos(\A)}, {2/3*\R*sin(\A)}) -- ({\R*cos(\A)}, {\R*sin(\A)}) -- ({\R*cos(\A)-\R}, {\R*sin(\A)}) -- ({2/3*\R*cos(\A)-\R}, {2/3*\R*sin(\A)}) -- cycle;

\draw[fill=magenta!10,domain=\A/2:\A] plot ({2/3*\R*cos(\x)}, {-2/3*\R*sin(\x)}) -- ({\R*cos(\A)}, {-\R*sin(\A)}) -- ({2/3*\R*cos(\A/2) + \R*(1-2/3*cos(\A/2))*cos(\A)}, {-2/3*\R*sin(\A/2) - \R*(1-2/3*cos(\A/2))*sin(\A)}) -- cycle;
\draw[fill=magenta!10,domain=\A/2:\A] plot ({2/3*\R*cos(\x)}, {-2/3*\R*sin(\x)}) -- (0,0) -- ({2/3*\R*cos(\A/2)}, {-2/3*\R*sin(\A/2)});
\draw[fill=magenta!10]  ({2/3*\R*cos(\A)}, {-2/3*\R*sin(\A)}) -- ({\R*cos(\A)}, {-\R*sin(\A)}) -- ({\R*cos(\A)-\R}, {-\R*sin(\A)}) -- ({2/3*\R*cos(\A)-\R}, {-2/3*\R*sin(\A)}) -- cycle;

\draw[fill=cyan!10,domain=-\A/2:\A/2] plot ({2/3*\R*cos(\x)}, {2/3*\R*sin(\x)}) -- (0,0) -- ({2/3*\R*cos(-\A/2)}, {2/3*\R*sin(-\A/2)});

\draw[fill=blue!10,domain=\A/2:\A] plot ({2/3*\R*cos(\x)}, {2/3*\R*sin(\x)}) -- ({2/3*\R*cos(\A)-\R}, {2/3*\R*sin(\A)}) -- ({2/3*\R*cos(\A/2)-\R}, {2/3*\R*sin(\A/2)}) -- ({2/3*\R*cos(\A/2)}, {2/3*\R*sin(\A/2)});
\draw[fill=blue!10,domain=\A/2:\A] plot ({2/3*\R*cos(\x)}, {2/3*\R*sin(\x)}) -- (0,0) -- ({2/3*\R*cos(\A/2)}, {2/3*\R*sin(\A/2)});
\draw[fill=blue!10]  ({2/3*\R*cos(\A)}, {2/3*\R*sin(\A)}) -- (0,0) -- (-\R,0) -- ({2/3*\R*cos(\A)-\R}, {2/3*\R*sin(\A)}) -- cycle;

\draw[fill=blue!10,domain=\A/2:\A] plot ({2/3*\R*cos(\x)}, {-2/3*\R*sin(\x)}) -- ({2/3*\R*cos(\A)-\R}, {-2/3*\R*sin(\A)}) -- ({2/3*\R*cos(\A/2)-\R}, {-2/3*\R*sin(\A/2)}) -- ({2/3*\R*cos(\A/2)}, {-2/3*\R*sin(\A/2)});
\draw[fill=blue!10,domain=\A/2:\A] plot ({2/3*\R*cos(\x)}, {-2/3*\R*sin(\x)}) -- (0,0) -- ({2/3*\R*cos(\A/2)}, {-2/3*\R*sin(\A/2)});
\draw[fill=blue!10]  ({2/3*\R*cos(\A)}, {-2/3*\R*sin(\A)}) -- (0,0) -- (-\R,0) -- ({2/3*\R*cos(\A)-\R}, {-2/3*\R*sin(\A)}) -- cycle;

\draw[dashed] ({2/3*\R*cos(\A/2)-\R}, {2/3*\R*sin(\A/2)}) -- ({2/3*\R*cos(\A/2)}, {2/3*\R*sin(\A/2)});
\draw[dashed] ({2/3*\R*cos(-\A/2)-\R}, {2/3*\R*sin(-\A/2)}) -- ({2/3*\R*cos(-\A/2)}, {2/3*\R*sin(-\A/2)});

\draw[white,thick] ({2/3*\R*cos(\A/2) + \R*(1-2/3*cos(\A/2))*cos(\A)}, {2/3*\R*sin(\A/2) + \R*(1-2/3*cos(\A/2))*sin(\A)}) -- ({\R*cos(\A)}, {\R*sin(\A)}) -- ({\R*cos(\A)-\R}, {\R*sin(\A)})--({-\R}, {0}) -- ({\R*cos(\A)-\R}, {-\R*sin(\A)}) -- ({\R*cos(\A)}, {-\R*sin(\A)})--({2/3*\R*cos(\A/2) + \R*(1-2/3*cos(\A/2))*cos(\A)}, {-2/3*\R*sin(\A/2) - \R*(1-2/3*cos(\A/2))*sin(\A)});

\draw[] (-\R/2,0) node[below] {$\mathbf{e}$};
\draw[] (-\R/4,-\R/3) node[below] {$\mathbf{f}$};
\draw[] (-\R/4,\R/3) node[below] {$\mathbf{f}'$};
\draw[line width=2] (-\R,0) -- (0,0);

\draw[cyan] (\R/3,0) node[] {$\omega_{\mathbf{e}}$};
\draw[blue] (\R/4-0.1,\R/4+0.4) node[right] {$\omega_{\mathbf{ef'}}$};
\draw[blue] (\R/4-0.1,-\R/4-0.4) node[right] {$\omega_{\mathbf{ef}}$};

\draw[magenta] (\R/2,\R/2+0.4) node[right] {$\omega_{\mathbf{f'}}$};
\draw[magenta] (\R/2,-\R/2-0.4) node[right] {$\omega_{\mathbf{f}}$};
\draw (\R-0.5,0)  node {$\Omega_{\mathrm{int}}$} ;
\end{tikzpicture}
\end{center}
\caption{\label{fig:edge-notation}
Notation near an edge $\mathbf{e}$ with two faces $\mathbf{f},\mathbf{f}'$ 
meeting at the edge and no vertex close by. Left: front view (edge collapses to point). Right: side view.}
\end{figure}
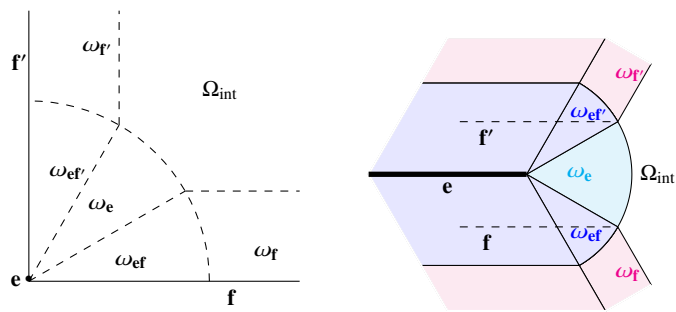

Figure~\ref{fig:vertex-notation} 
illustrates the neighborhoods near a vertex and Figure~\ref{fig:edge-notation} 
shows the neighborhoods close to an edge but away from a vertex.
We drop the superscript $\omegaeps$ unless strictly necessary.

Each neighborhood may have a different value $\omegaeps$, 
but we assume that each $\omega_{\bullet}$ abuts at 
most one vertex, one edge, or one face of $\partial\Omega$. 
Since only finitely many distinct types of neighborhoods are needed to decompose the polyhedron, 
the interior $\Omega_{\rm int}\subset\Omega $ has a positive distance from the boundary.  
\bigskip

In each of the neighborhoods, as done in \cite{FMMS25-regularity3d}, we may choose local coordinates that introduce orthogonal coordinates w.r.t. the closest face/edge. 

  Let $\omega_{\bullet}^\omegaeps \subset \Omega$ with $\bullet \in \{\bfv,\bfe,\bff,\bfv\bfe,\bfv\bff,\bfe\bff,\bfv\bfe\bff\}$ be an arbitrary vertex/edge/face neighborhood. We take orthogonal unit vectors $\{\gperp, \gparperp, \gpar\}$  such that :
\begin{itemize}
    \item  For $\bullet \in \{\bff,\bfv\bff\}$, let $\gpar,\gparperp$ span the plane tangential to $\bff$ and let $\gperp$ be perpendicular to $\bff$.
    \item  For $\bullet \in \{\bfe,\bfv\bfe\}$, let $\gpar$ be tangential to $\bfe$ and $\gparperp, \gperp$ be mutually orthogonal and span the plane transversal to $\bfe$.
    \item  For $\bullet \in \{\bfe\bff,\bfv\bfe\bff\}$, let $\gpar$ be parallel to $\bfe$ and $\bff$, $\gparperp$ be perpendicular to $\bfe$ and parallel to $\bff$ and $\gperp$ be perpendicular to $\bfe$ and $\bff$.
    \item  For $\bullet \in \{\bfv\}$ or the set $\Omega_{\rm int} \subset \Omega$, we take the standard coordinates in $\R^3$.
 \end{itemize}

  With these vectors and for  $\beta = (\betaperp, \betaparperp, \betapar)\in\N^3_0$, we introduce the derivative 
 \begin{align*}
 D_{(\gperp, \gparperp, \gpar)}^{\beta} =  D_{\gperp}^{\betaperp}D_{\gparperp}^{\betaparperp}D_{\gpar}^{\betapar}.
 \end{align*}

For our purpose of studying the regularity on the unit cube, these coordinates can easily be chosen as rotations of the canonical coordinates with the important property that the coordinate $\gperp$ is always orthogonal to the considered (closest) face of the unit cube.

\subsection{Weighted analytic regularity}
\label{sec:StatReg}
We recall the analytic regularity result of \cite{FMMS25-regularity3d} 
for \eqref{eq:FracLap}.
%
\begin{theorem}\label{thm:regularity}
Let $\Omega := (0,1)^3$ and $f \in C^{\infty}(\overline{\Omega})$ satisfy
with a constant $\gamma_f>0$
  \begin{equation}
    \label{eq:analyticdata}
\forall j \in \N_0\colon \quad 
    \sum_{\betam = j} \|\partial^\beta f\|_{L^2(\Omega)} \leq \gamma_f^{j+1}j^j. 
  \end{equation}
Let $u$ be the weak solution of \eqref{eq:weakform}.

Then, 
there exists $\gamma>0$ depending only on $\gamma_f$, $s$, 
and $\Omega$ such that for all $t<1/2$, 
    there exists $C_t>0$ such that for all $\beta = (\betaperp, \betaparperp, \betapar)\in\N^3_0$:
\begin{enumerate}
\item For all $\bfv \in \mathcal{V}$ and neighborhoods $\omega_{\bfv}$, there holds
  \begin{align}\label{eq:weighted_regularity_vertex}
    \| r_{\partial \Omega}^{-t-s} r_{\bfv}^{\betapar}
             D_{(\gperp, \gparperp, \gpar)}^{\beta} u \|_{L^2(\omega_{\bfv})} \leq C_t \gamma^{\betam}\betam^{\betam}.
  \end{align}
\item For all $\bfe \in \mathcal{E}$ and neighborhoods $\omega_{\bfe}$, there holds
  \begin{align}\label{eq:weighted_regularity_edge}
    \| r_{\partial \Omega}^{-t-s} r_{\bfe}^{\betaparperp}
             D_{(\gperp, \gparperp, \gpar)}^{\beta} u \|_{L^2(\omega_{\bfe})} \leq C_t \gamma^{\betam}\betam^{\betam}.
  \end{align}
\item For all $\bff \in \mathcal{F}$ and neighborhoods $\omega_{\bff}$, there holds
  \begin{align}\label{eq:weighted_regularity_face}
    \| r_{\partial \Omega}^{-t-s} r_{\bff}^{\betaperp}
             D_{(\gperp, \gparperp, \gpar)}^{\beta} u \|_{L^2(\omega_{\bff})} \leq C_t \gamma^{\betam}\betam^{\betam}.
  \end{align}
\item For all $\bfv \in \mathcal{V}, \bfe \in \mathcal{E}$ and neighborhoods $\omega_{\bfv\bfe}$, there holds
  \begin{align}\label{eq:weighted_regularity_vertexedge}
    \| r_{\partial \Omega}^{-t-s} r_{\bfv}^{\betapar}r_{\bfe}^{\betaparperp}
             D_{(\gperp, \gparperp, \gpar)}^{\beta} u \|_{L^2(\omega_{\bfv\bfe})} \leq C_t \gamma^{\betam}\betam^{\betam}.
  \end{align}
\item For all $\bfv \in \mathcal{V}, \bff \in \mathcal{F}$ and neighborhoods $\omega_{\bfv\bff}$, there holds
  \begin{align}\label{eq:weighted_regularity_vertexface}
    \| r_{\partial \Omega}^{-t-s} r_{\bfv}^{\betapar}r_{\bff}^{\betaperp}
             D_{(\gperp, \gparperp, \gpar)}^{\beta} u \|_{L^2(\omega_{\bfv\bff})} \leq C_t \gamma^{\betam}\betam^{\betam}.
  \end{align}
\item For all $\bfe \in \mathcal{E}, \bff \in \mathcal{F}$ and neighborhoods $\omega_{\bfe\bff}$, there holds
  \begin{align}\label{eq:weighted_regularity_edgeface}
    \| r_{\partial \Omega}^{-t-s} r_{\bfe}^{\betaparperp}r_{\bff}^{\betaperp}
             D_{(\gperp, \gparperp, \gpar)}^{\beta} u \|_{L^2(\omega_{\bfe\bff})} \leq C_t \gamma^{\betam}\betam^{\betam}.
  \end{align}
\item For all $\bfv \in \mathcal{V}, \bfe \in \mathcal{E}, \bff \in \mathcal{F}$ and neighborhoods $\omega_{\bfv\bfe\bff}$, there holds
  \begin{align}\label{eq:weighted_regularity_vertexedgeface}
    \| r_{\partial \Omega}^{-t-s} r_{\bfv}^{\betapar} r_{\bfe}^{\betaparperp} r_{\bff}^{\betaperp} 
             D_{(\gperp, \gparperp, \gpar)}^{\beta} u \|_{L^2(\omega_{\bfv\bfe\bff})} \leq C_t \gamma^{\betam}\betam^{\betam}.
  \end{align}
\item For $\Omega_{\rm int}$ there holds 
  \begin{align}\label{eq:weighted_regularity_interior}
    \| r_{\partial \Omega}^{-t-s} D_{(\gperp, \gparperp, \gpar)}^{\beta} u \|_{L^2(\Omega_{\rm int})} \leq C_t \gamma^{\betam}\betam^{\betam}.
  \end{align}
\end{enumerate}
\end{theorem}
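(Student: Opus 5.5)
Theorem~\ref{thm:regularity} is a specialization of the main result of \cite{FMMS25-regularity3d}, so the plan is mainly to check that its hypotheses hold and to translate its notation. First, $\Omega=(0,1)^3$ is a bounded polyhedron whose faces, edges and vertices are the sets $\cF$, $\cE$, $\cV$ used above. Second, the decomposition \eqref{eq:Nghbrhoods} and the local coordinates $(\gperp,\gparperp,\gpar)$ agree with those of \cite{FMMS25-regularity3d}. On the cube these coordinates are coordinate permutations and reflections of the canonical frame, so the derivatives $D^\beta_{(\gperp,\gparperp,\gpar)}$ are just reordered Cartesian partial derivatives. Third, \eqref{eq:analyticdata} is the analyticity assumption on $f$ required there. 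With these checks done, items 1 to 8 are exactly the estimates of \cite{FMMS25-regularity3d}, and I would cite them.

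For completeness, I would recall how the proof in \cite{FMMS25-regularity3d} is organized, in case one wants to re-derive it.

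\textbf{Step 1 (base regularity).} A shift theorem gives $u\in\tH^{s+1/2-\varepsilon}(\Omega)$. Together with a Hardy-type inequality this yields the base case $\beta=0$, namely $\|r_{\partial\Omega}^{-t-s}u\|_{L^2(\Omega)}\le C_t$ for every $t<1/2$. This is where the restriction $t<1/2$ enters.

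\textbf{Step 2 (tangential derivatives).} Tangential derivatives, i.e.\ those along $\gpar$ near edges and along $\gpar,\gparperp$ near faces, are controlled with localized difference quotients. These use cut-off functions on nested families of balls or cones, scaled by the distance to the nearest lower-dimensional singular set. Because the operator is translation invariant, tangential difference quotients of $u$ solve the same equation with data built from $f$ plus nonlocal commutator terms. The commutator terms are bounded by the already-known lower-order norms, using the decay of the kernel away from the cut-off support.

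\textbf{Step 3 (normal derivatives).} The derivatives in the direction $\gperp$ transverse to the closest face are recovered through the Caffarelli--Silvestre extension. The extended function solves a degenerate elliptic equation in $\R^4_+$. Normal derivatives are expressed through tangential ones and the equation, then traced back, and this produces the weight $r_{\bff}^{\betaperp}$. Near edges and vertices, scaling arguments on dyadic or geometric layers give the corresponding weights $r_{\bfe}^{\betaparperp}$ and $r_{\bfv}^{\betapar}$.

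\textbf{Step 4 (analytic growth).} The factorial bounds $\gamma^{|\beta|}|\beta|^{|\beta|}$ come from induction over $|\beta|$ on nested domains with shrinking margins of width proportional to $1/|\beta|$. This is the classical Morrey--Nirenberg bookkeeping, applied so that the constants stay geometric in $|\beta|$.

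I expect the main obstacle to be the corner regions $\omegavef$ and the transition regions between the neighborhoods. There, three anisotropic weights act at once, and the nonlocal interaction terms produced by the cut-offs must be estimated uniformly across scales without losing the factorial structure. The approach I would try first is to localize on geometric shells around the vertex and edge and rescale each shell to unit size. The face/edge estimates are then applied on each rescaled shell, and the nonlocal tails are bounded by the global $\tH^s$ norm of $u$ together with the kernel decay.
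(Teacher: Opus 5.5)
Your approach matches the paper's: the paper gives no independent proof of Theorem~\ref{thm:regularity} and simply recalls it from \cite{FMMS25-regularity3d}, as your first paragraph does after checking that the cube, the decomposition \eqref{eq:Nghbrhoods}, the local frames $(\gperp,\gparperp,\gpar)$ and the data assumption \eqref{eq:analyticdata} fit that result. Your Steps 1--4 are an optional outline of the cited proof; the argument does not need them, and the paper does not address them.
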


\begin{remark}
As we are considering the unit cube, the use of axis parallel, tensor product elements and local coordinates that are simple rotations of the canonical coordinates significantly simplifies the proof of the main result. In the case of general polyhedra, more complicated, and in general non-affine element transformations and changes of variables have to be taken into account, which induces non-trivial change of variable transformations in the regularity theory. Incorporating this will be the topic of the follow-up work \cite{FMMSfuture}.
\end{remark}
\section{Exponential convergence of $hp$-FEM}
\label{sec:proof}

In this section, we provide the proof of our main result, Theorem~\ref{thm:hpExpConv}. Starting from the C\'ea-Lemma, we have to construct 
a function $v_N \in W^L_q = \mathcal{S}^q_0(\Omega,\Tg)$ that induces the claimed root exponential bound in the $\widetilde H^s(\Omega)$-norm. In particular, 
we will exploit that elements in $\Tg$ close to the boundary are exponentially small and use a $hp$-FEM interpolation operator $\Pi_q^L$ on the remaining elements.

A major difficulty in the analysis of fractional PDEs is the non-local nature of the energy norm, which can be addressed by either taking a localizable upper bound, \cite{Faermann2002}, or by using an embedding into a weighted integer order Sobolev space. Here, in the same way as in our previous works, \cite{BFMMS-hp1d,FMMS23-hp2d}, we choose the latter way of localization. 

\subsection{Embedding into weighted integer order space}
\label{sec:LoclNrm}

Recall $r_{\partial\Omega}(x):=\operatorname{dist}(x,\partial\Omega)$ for $x\in \Omega$.
For $\mu \in [0,1)$ 
and an open set $\omega\subseteq \Omega$,
denote by 
$H^1_\mu(\omega)$ the local Sobolev space 
defined via the weighted norm 
$\| \cdot \|_{H^1_\mu(\omega)}$ given by
\begin{align}\label{eq:H1b}
\| v \|^2_{H^1_\mu(\omega)}
:= 
\| r_{\partial\Omega}^\mu \nabla v \|_{L^2(\omega)}^2
+ 
\| r_{\partial\Omega}^{\mu-1} v    \|_{L^2(\omega)}^2
.
\end{align}
By \cite[Prop.~3.1]{FMMS23-hp2d}, one has the embedding 
$H^1_\mu(\Omega) \subset \widetilde H^s(\Omega)$ for $\mu \in [0,1-s)$, 
i.e., there holds 
\begin{align}\label{eq:LocNrm}
\forall v\in H^1_\mu(\Omega): \quad 
\| v \|_{\tH^{s}(\Omega)}
\leq 
C_{\mu,\sigma} \| v \|_{H^1_\mu(\Omega)} 
\;.
\end{align}
Thus, rather than working with the non-local $\tH^s$-norm, we can construct localized approximations in the local $H^1_\mu$-norm.

\subsection{Definition of the $hp$-interpolation operator  $\Pi_q^L$}
\label{S:DefhpInt}

Due to the localization of the previous subsection, we aim  for an elementwise construction of the 
global $hp$-interpolator $\PiLq: H^1_\mu(\Omega) \to \VLq$, which will be obtained 
by assembling local Gauss-Lobatto-Legendre (GLL) interpolants.

The tensor product GLL interpolants on the reference cube approximate analytic functions 
on the reference cube with exponential accuracy in the employed polynomial degree. 
The following Lemma~\ref{lemma:hat_Pi_1_infty} is a 
straight-forward generalization of the case $d = 2$ from 
\cite[Lem.~{3.2}]{BMS23} to the present case $d = 3$: 

\begin{lemma}[approximation on cuboids]
\label{lemma:hat_Pi_1_infty}
Let $\widehat K = (0,1)^3$ 
be the reference cube. 
For each $q \in {\mathbb N}$,
the tensor-product Gauss-Lobatto interpolation operator 
$\widehat \Pi_q := \hat{\pi}_q^{\otimes 3}: 
  C^0(\overline{\widehat K}) \rightarrow {\bbQ}_q$,
with $\hat{\pi}_q$ being the Gauss-Lobatto interpolant of polynomial degree $q\geq 1$ 
    on $(0,1)$, satisfies (with the projectors understood to act on functions 
    defined on $\hat{e}$, $\hat{f}$, 
    identified with $(0,1)$ and $(0,1)^2$, respectively):
\begin{enumerate}
\item[0]
\label{item:lemma:hat_Pi_1:point}
For each vertex $\hat{v} \subset \partial\widehat K $,
$(\widehat \Pi_q u)(\hat{v}) = u(\hat{v})$.
\item[1]
\label{item:lemma:hat_Pi_1_infty-0a}
For each edge
$\hat{e} \subset \partial\widehat K$, 
$(\widehat \Pi_q u)|_{\hat{e}} = \hat{\pi}_q(u|_{\hat{e}})$.
\item[2]
\label{item:lemma:hat_Pi_1_infty-0b}
For each face 
$\hat{f} \subset \partial\widehat K$, 
$(\widehat \Pi_q u)|_{\hat{f}} = \hat{\pi}_q^{\otimes 2}(u|_{\hat{f}})$.
%
%
\item [$\pi$] (projection property)
\label{item:lemma:hat_Pi_1_infty--1}
$(\widehat \Pi_q)^2 = \widehat \Pi_q$ on ${\bbQ}_q = \bbP_q^{\otimes 3}$. 
\item [$\alpha$] (approximation property)
\label{item:lemma:hat_Pi_1_infty-ii}
Let $u \in C^\infty(\widehat K)$ satisfy 
for some $C_u$, $\gamma > 0$, 
and all
$(n,m,\ell) \in {\mathbb N}_0^3 $
\begin{align}
\label{eq:lemma:hat_Pi_1_infty-reg}
\|\partial_\xh^m \partial_\yh^n \partial_\zh^\ell u\|_{L^\infty(\widehat K)} 
\leq 
C_u \gamma^{n+m+\ell}
(m+1)^m (n+1)^n  (\ell +1)^\ell.
\end{align}
Then, there exist $C, b>0$ such that for all $q\geq 1$
 \begin{align*}
\|u - \widehat \Pi_q u\|_{W^{1,\infty}(\widehat K)}  \leq C C_u e^{-bq}.
 \end{align*}

\end{enumerate}
\end{lemma}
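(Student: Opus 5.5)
Items 0, 1, 2 and $\pi$ are structural consequences of the tensor-product construction, and I would dispatch them first. The one-dimensional GLL interpolant $\hat\pi_q$ uses the $q+1$ Gauss--Lobatto nodes $0=t_0<\dots<t_q=1$, so the nodal set of $\widehat\Pi_q$ is $\{t_i\}^3$. On a face, say $\{\hat z = 0\}$, the basis functions are products $\ell_i(\hat x)\ell_j(\hat y)\ell_k(\hat z)$ of Lagrange polynomials. Restricting to $\hat z=0$ kills every term with $k\neq 0$ and leaves $\ell_0(0)=1$. Hence $(\widehat\Pi_q u)|_{\hat f}=\sum_{i,j}u(t_i,t_j,0)\ell_i\ell_j=\hat\pi_q^{\otimes2}(u|_{\hat f})$, which is item 2. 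Restricting once more to an edge gives item 1, and to a vertex gives item 0. The projection property $\pi$ follows because $\hat\pi_q$ reproduces $\bbP_q$, so $\hat\pi_q^{\otimes3}$ reproduces $\bbP_q^{\otimes3}$.

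For the approximation property $\alpha$, I would follow the two-dimensional argument of \cite[Lem.~3.2]{BMS23} and telescope in the coordinate directions:
\begin{align*}
u-\widehat\Pi_q u = (I-\hat\pi_q^{x})u + \hat\pi_q^{x}(I-\hat\pi_q^{y})u + \hat\pi_q^{x}\hat\pi_q^{y}(I-\hat\pi_q^{z})u ,
\end{align*}
where $\hat\pi_q^{x}$ denotes $\hat\pi_q$ acting in $\hat x$ only. Two one-dimensional facts are needed.

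\emph{(a) Stability.} The GLL interpolant is stable in $L^\infty$ with Lebesgue constant $O(\log q)$. In $W^{1,\infty}$ it is stable with at most polynomial growth, e.g.\ $\|(\hat\pi_q v)'\|_{L^\infty}\lesssim q^2\|v\|_{L^\infty}$ by Markov's inequality, or $\lesssim \log q\,\|v'\|_{L^\infty}$ via the $H^1$-stability of GLL interpolation.

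\emph{(b) Exponential convergence.} If $v$ on $(0,1)$ satisfies $\|v^{(m)}\|_{L^\infty}\le C\gamma^m(m+1)^m$, then $v$ extends holomorphically to a Bernstein ellipse $\mathcal E_\rho$ with $\rho>1$ depending only on $\gamma$. The classical Chebyshev/GLL interpolation estimates then give $\|v-\hat\pi_q v\|_{W^{1,\infty}}\le C' C e^{-bq}$.

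Each telescoping term is bounded by applying (b) in one variable and (a) in the others. Derivatives in the transverse variables commute with the one-dimensional operator, and the hypothesis \eqref{eq:lemma:hat_Pi_1_infty-reg} holds uniformly in the frozen variables. The $W^{1,\infty}$ seminorm of each term requires $\partial_\xh,\partial_\yh,\partial_\zh$ of it. For a derivative in the interpolated direction I use the $W^{1,\infty}$ version of (b). For a derivative in a transverse direction I apply (b) to $\partial u$, which still satisfies the analytic bound up to a factor of order $\gamma$. The stability factors $q^{4}$ or $(\log q)^2$ from the other two directions are absorbed by reducing $b$.

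The main obstacle is fact (b): a clean exponential $W^{1,\infty}$ bound for one-dimensional GLL interpolation of functions with the derivative growth $(m+1)^m$. The hypothesis only bounds real derivatives, so I would first pass to the holomorphic extension using the Taylor series at each point of $[0,1]$. The radius of convergence is at least $1/(e\gamma)$, since $(m+1)^m/m!\lesssim e^{m}$. That yields a uniform neighbourhood of $[0,1]$, and hence some $\mathcal E_\rho$ containing it. I then use the Hermite contour-integral representation of the interpolation error, with the nodal polynomial $(1-x^2)P_q'(x)$ bounded on $\mathcal E_\rho$ relative to $[0,1]$ by $\rho^{q}$ up to polynomial factors. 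Derivative bounds follow by Cauchy estimates on a slightly smaller ellipse. Polynomial factors in $q$ are absorbed into $e^{-bq}$.
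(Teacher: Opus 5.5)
Your proposal is correct and matches the route the paper intends. The paper gives no separate proof; it calls the lemma a straightforward three-dimensional extension of the two-dimensional tensor-product GLL argument of \cite[Lem.~3.2]{BMS23}, and that is exactly what your nodal-restriction argument for items 0--2 and $\pi$, together with your directional telescoping using one-dimensional stability and exponential convergence, carries out.

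One small correction: the bound $\|(\hat\pi_q v)'\|_{L^\infty}\lesssim \log q\,\|v'\|_{L^\infty}$ does not follow from $H^1$-stability alone, since the $L^2$-to-$L^\infty$ inverse estimate costs a factor $q$. The Markov-based bound $\|(\hat\pi_q v)'\|_{L^\infty}\le 2q^2\Lambda_q\|v-v(0)\|_{L^\infty}\le 2q^2\Lambda_q\|v'\|_{L^\infty}$, with Lebesgue constant $\Lambda_q=O(\log q)$, does the same job, because all polynomial factors in $q$ are absorbed into $e^{-bq}$.
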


The global $hp$-interpolator $\Pi^L_q$
is assembled from elementwise projectors 
via
\begin{align*}
\forall K \in \Tg: \; 
(\Pi^L_q u)|_K \circ F_K  
:= 
\widehat\Pi_q (\hat{u}_K),
\end{align*}
where $\hat{u}_K$ is as in \eqref{eq:hatu}.
    By items 1. and 2., $\Pi^L_q: C^0(\overline{\Omega})\to S^q_0(\Omega,\Tg)$.
\subsection{Mesh layers and cutoff function}
\label{sec:LayCut}
For $L\in \mathbb{N}$, we subdivide the mesh $\Tg$ into
boundary layer $\lay_0$, 
transition layer $\lay_1$, and 
internal mesh elements $\lay_{\mathrm{int}}$. 
Specifically, we let
\begin{align*}
  \lay_0 &:= \left\{ K\in \Tg: \overline{K}\cap \partial\Omega \neq \emptyset \right\},\\
  \lay_1 &:= \left\{ K\in \Tg\setminus \lay_0: \exists\, J\in \lay_0 \text{ such that }
  \overline{K}\cap \overline{J} \neq \emptyset \right\},\\
  \lay_{\mathrm{int}} &:= \Tg \setminus\left( \lay_0\cup\lay_1 \right).
\end{align*}
We introduce the continuous
cutoff function $\cutoff:\Omega \to [0,1]$ satisfying
\begin{equation}
  \label{eq:cutoff-prop}
\cutoff \in \mathcal{S}^1_0(\Omega, \Tg),
\qquad
\cutoff \equiv 0\text{ on all }K\in \lay_0,
\qquad
\cutoff \equiv 1\text{ on all }K\in \lay_{\mathrm{int}}.
\end{equation}
The subdomain consisting of the union of all mesh elements touching $\partial\Omega$ is
\begin{equation}
  \label{eq:SLzero}
\Omega^L_0 = {\rm interior} \left( \bigcup_{K\in \lay_0} \overline{K} \right)\;.
\end{equation}

Using the cut-off function $\cutoff$, 
we can split the best-approximation error into a contribution away from the boundary 
and a term that is only non-zero on the (small) boundary and transition layers:
\begin{align}
    \label{eq:error-decomp}
      \inf_{v\in \VLq}\| u - v \|_{\tH^s(\Omega)} &\leq \inf_{v\in \VLq}
\| \cutoff u - v \|_{\tH^s(\Omega)}  + \| (1-\cutoff) u \|_{\tH^s(\Omega)} \nonumber
\\
&\leq C_{\mu, s}
\| \cutoff (u - \Pi^L_{q-1}) u\|_{H^1_\mu(\Omega)}  + \| (1-\cutoff) u \|_{\tH^s(\Omega)}.
\end{align}
Here, we used $\cutoff\in \mathcal{S}^1_0(\Omega, \Tg)$ 
so that $\cutoff \Pi^L_{q-1} u \in \VLq$ for $q \geq 2$.

By \cite[Lem.~B.1, Lem.~B.2]{FMMS23-hp2d}, for $w \in H^1_\mu(\Omega)$, we have the bounds
 \begin{equation}\label{eq:est-cutoff}
 \| \cutoff w\|_{H^1_\mu(\Omega)}        \leq C \| w\|_{H^1_\mu (\Omega\setminus \Omega^L_0 )}, \;\;
    \| (1-\cutoff) w\|_{H^1_\mu(\Omega)} \leq C \| w\|_{H^1_\mu(S_{c\sigma^L})},
  \end{equation}
with constants $c,C>0$ independent of $L$.
Here, for $t>0$,  $ S_t = \{x\in \Omega: r(x)< t\} $.
%
\subsection{Estimates for elements at the boundary}
The last term in \eqref{eq:error-decomp} is exponentially small by construction of $\Tg$.
\begin{lemma}
  \label{lemma:u-gu}
  Let $u$ be the solution to \eqref{eq:weakform} for $s\in (0,1)$.
  Let $L\in \mathbb{N}$ and $\cutoff$ be
defined as in \eqref{eq:cutoff-prop}. 
Then, there exist $C, b>0$ independent of $L$ such that 
\begin{equation}
  \label{eq:u-gu}
  \| u - \cutoff u \|_{\tH^s(\Omega)} \leq C\exp(-bL).
\end{equation}
\end{lemma}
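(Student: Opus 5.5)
We plan to bound $\|(1-\cutoff)u\|_{\tH^s(\Omega)}$ through the localizing embedding \eqref{eq:LocNrm} followed by the cutoff estimate \eqref{eq:est-cutoff}. Since $(1-\cutoff)u$ vanishes outside $\Omega$ and lies in $H^1_\mu(\Omega)$ for suitable $\mu$, we get
\begin{align*}
\|(1-\cutoff)u\|_{\tH^s(\Omega)} \leq C_{\mu,\sigma}\|(1-\cutoff)u\|_{H^1_\mu(\Omega)} \leq C\|u\|_{H^1_\mu(S_{c\sigma^L})}.
\end{align*}
So it suffices to show that the weighted $H^1_\mu$-norm of $u$ on the boundary strip $S_{c\sigma^L}$ decays like $\sigma^{\delta L}$ for some $\delta>0$. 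The target is then $\exp(-bL)$ with $b=\delta|\ln\sigma|$.

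For the strip estimate we use Theorem~\ref{thm:regularity} with $|\beta|\le 1$. With $|\beta|=0$, every neighborhood estimate gives $\|r_{\partial\Omega}^{-t-s}u\|_{L^2(\omega_\bullet)}\le C_t$. Summing over the finitely many neighborhoods in \eqref{eq:Nghbrhoods} yields $\|r_{\partial\Omega}^{-t-s}u\|_{L^2(\Omega)}\le C_t$ for any fixed $t<1/2$. On $S_{c\sigma^L}$ we have $r_{\partial\Omega}<c\sigma^L$, so
\begin{align*}
\|r_{\partial\Omega}^{\mu-1}u\|_{L^2(S_{c\sigma^L})}\le (c\sigma^L)^{\mu-1+t+s}\,C_t ,
\end{align*}
which is exponentially small provided $\mu-1+t+s>0$. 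Since $\mu<1-s$ is forced by \eqref{eq:LocNrm}, we need $1-s-t<\mu<1-s$. This is possible for any $t\in(0,1/2)$, so we fix, say, $t=1/4$ and $\mu\in(1-s-t,1-s)$.

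For the gradient term, the three first-order directional derivatives in each neighborhood come with weights $r_\bullet^{1}$ where $\bullet$ ranges over $\bfv$, $\bfe$ or $\bff$. In the worst case these weights are the factors $r_{\bfv}$, $r_{\bfe}$, $r_{\bff}$. Since $r_{\bff}\ge r_{\partial\Omega}$ and likewise for edges and vertices on the relevant neighborhoods, we have $r_\bullet^{-1}\le r_{\partial\Omega}^{-1}$. The regularity bounds therefore imply
\begin{align*}
\|r_{\partial\Omega}^{1-t-s}\nabla u\|_{L^2(\Omega)}\le C_t\gamma .
\end{align*}
Here we use that the local coordinate systems are rotations, so $|\nabla u|$ is controlled by the three directional derivatives. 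It then follows that
\begin{align*}
\|r_{\partial\Omega}^{\mu}\nabla u\|_{L^2(S_{c\sigma^L})}\le (c\sigma^L)^{\mu-1+t+s}C_t\gamma
\end{align*}
with the same positive exponent.

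The main obstacle is the bookkeeping in this gradient step. The weight attached to each derivative direction in Theorem~\ref{thm:regularity} is not always $r_{\partial\Omega}$ itself but $r_{\bfv}$, $r_{\bfe}$ or $r_{\bff}$. These are larger than $r_{\partial\Omega}$, and that is the favorable direction for us, but it must be checked in every one of the eight neighborhood types. In those cases one has to use $r_\bullet\ge r_{\partial\Omega}$ pointwise in $\omega_\bullet$, which holds because $r_{\partial\Omega}=\min_{\bff}r_{\bff}\le r_{\bfe}\le r_{\bfv}$ for incident entities. We also need that $u\in H^1_\mu(\Omega)$ at all, so that \eqref{eq:est-cutoff} and \eqref{eq:LocNrm} apply; this follows from the same weighted bounds taken on the whole of $\Omega$. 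Combining the two terms gives \eqref{eq:u-gu}.
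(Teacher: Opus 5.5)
Your proposal is correct and follows essentially the same argument as the paper. Both use the embedding \eqref{eq:LocNrm} and the cutoff bound \eqref{eq:est-cutoff}, then apply Theorem~\ref{thm:regularity} with $|\beta|\le 1$ to extract the factor $(c\sigma^L)^{\mu+t+s-1}$ on $S_{c\sigma^L}$.

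The only difference is organizational. You first assemble global bounds $\|r_{\partial\Omega}^{-t-s}u\|_{L^2(\Omega)}+\|r_{\partial\Omega}^{1-t-s}\nabla u\|_{L^2(\Omega)}\le C$ using just the pointwise inequalities $r_{\partial\Omega}\le r_\bullet$ (and $r_{\partial\Omega}\le 1$ for unweighted directions). The paper instead works neighborhood by neighborhood using $r_{\partial\Omega}\sim r_\bullet$. You should also require $\mu\ge 0$, which is possible since $[0,1-s)\cap(1-s-t,1-s)\neq\emptyset$.
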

\begin{proof}
We fix $\mu \in [0,1)$ additionally satisfying $\mu\in (1/2-s, 1-s)$ 
and estimate the $H^1_\mu(\Omega)$-norm of $u-\cutoff u$, which gives the stated bound due to \eqref{eq:LocNrm}.

We now decompose $S_{c\sigma^L} $ into its components belonging to vertex, edge,
vertex-edge, and internal neighborhoods, i.e., we consider $\Omega^{\rm int}\cap S_{c\sigma^L}$ as well as $\omega_\bullet \cap S_{c\sigma^L}$ with $\bullet \in \{\bfv,\bfe,\bff,\bfv\bfe,\bfv\bff,\bfe\bff,\bfv\bfe\bff\}$. The union of these sets over all vertices, edges and faces as in \eqref{eq:Nghbrhoods} gives  back the neighborhood $S_{c\sigma^L}$.

Vertex, edge and face neighborhoods:
since $\mu > 1/2-s$, we may choose $t \in [0,1/2)$ such that
$\mu+t+s-1 > 0$. Let $\mathbf{v}\in \mathcal{V}$, $\mathbf{e}\in \mathcal{E}$, $\mathbf{f}\in \mathcal{F}$ and $\omega_\bullet$ be a fixed neighborhood for $\bullet \in \{\bfv,\bfe,\bff\}$. On $\omega_\bullet$, we have $r_{\partial\Omega} \sim r_{\bullet}$. Consequently, the weighted regularity estimate from Theorem~\ref{thm:regularity} implies
\begin{align*}
  &\|u \|_{H^1_{\mu}(\omega_\bullet\cap S_{c\sigma ^L})} \simeq
  \|r_{\bullet}^{\mu-1} u \|_{L^2(\omega_\bullet\cap S_{c\sigma ^L})} + \sum_{\beta \in \N_0^3, |\beta| = 1}
  \|r_{\bullet}^\mu D_{(\gperp, \gparperp, \gpar)}^{\beta}  u \|_{L^2(\omega_\bullet\cap S_{c\sigma ^L})} \\
   & \leq
    \|r_{\bullet}^{-t-s+\mu-1+t+s} u \|_{L^2(\omega_\bullet\cap S_{c\sigma ^L})} +
\sum_{\beta \in \N_0^3, |\beta| = 1}
  \|r_{\bullet}^{-t-s+\mu+t+s-1}r_\bullet D_{(\gperp, \gparperp, \gpar)}^{\beta}  u \|_{L^2(\omega_\bullet\cap S_{c\sigma ^L})}
\\  & \leq
  (c\sigma^L)^{\mu+t+s-1} \bigg( \|r_{\partial\Omega}^{-t-s} u \|_{L^2(\omega_\bullet\cap S_{c\sigma ^L})}
    \\ &  \qquad +  \sum_{\beta \in \N_0^3, |\beta| = 1}
  \|r_{\partial\Omega}^{-t-s} r_\bullet D_{(\gperp, \gparperp, \gpar)}^{\beta}  u \|_{L^2(\omega_\bullet\cap S_{c\sigma ^L})} \bigg)\\
  & \stackrel{Thm.~\ref{thm:regularity}}{\leq} C  (c\sigma^L)^{\mu+t+s-1}.
\end{align*}
The same argument can be employed for the interior part $\Omega^{\rm int}$.

Vertex-edge neighborhoods $\omegave$: Let $\mathbf{v}\in \mathcal{V}$, $\mathbf{e}\in \mathcal{E}$ and $\omegave$ be a fixed vertex-edge neighborhood. On $\omegave$, we have $r_{\partial\Omega}  \sim \re \leq \rv$. Using the regularity estimate \eqref{eq:weighted_regularity_vertexedge}
with $\beta = (\betapar, \betaperp,  \betaparperp) = (0,0,0)$ as well as $\beta = (1,0,0)$,  $\beta = (0,1,0)$  and $\beta = (0,0,1)$, we obtain
\begin{align*}
&\|u \|_{H^1_{\mu}(\omegave\cap S_{c\sigma ^L})} \simeq
  \|r_{\bfe}^{\mu-1} u \|_{L^2(\omegave\cap S_{c\sigma ^L})} + \sum_{\beta \in \N_0^3, |\beta| = 1}
  \|r_{\bfe}^\mu D_{(\gperp, \gparperp, \gpar)}^{\beta}  u \|_{L^2(\omegave\cap S_{c\sigma ^L})}  \\
   & \leq
    \|r_{\mathbf{e}}^{-t-s+\mu-1+t+s} u \|_{L^2(\omegave\cap S_{c\sigma ^L})} +    
   \|r_{\mathbf{e}}^{-t-s+\mu+t+s-1}\re  D_{\gperp} u \|_{L^2(\omegave\cap S_{c\sigma ^L})} 
     \\ & \qquad \qquad + \|r_{\mathbf{e}}^{-t-s+\mu+t+s-1}\re  D_{\gparperp} u \|_{L^2(\omegave\cap S_{c\sigma ^L})} 
 +
\|r_{\mathbf{e}}^{-t-s+\mu+t+s}  \re^{-1} \rv D_{\gpar} u \|_{L^2(\omegave\cap S_{c\sigma ^L})}\\
  & \leq
  (c\sigma^L)^{\mu+t+s-1} \bigg( \|r_{\partial\Omega}^{-t-s} u \|_{L^2(\omegave\cap S_{c\sigma ^L})} + \|r_{\partial\Omega}^{-t-s}\re D_{\gperp} u \|_{L^2(\omegave\cap S_{c\sigma ^L})} 
  \\ &  \qquad
       + \|r_{\partial\Omega}^{-t-s} r_{\mathbf{e}} D_{\gparperp} u \|_{L^2(\omegave\cap S_{c\sigma ^L})} +  \|r_{\partial\Omega}^{-t-s} \rv  D_{\gpar} u \|_{L^2(\omegave\cap S_{c\sigma ^L})} \bigg)\\
  & \stackrel{\eqref{eq:weighted_regularity_vertexedge}}{\leq} C  (c\sigma^L)^{\mu+t+s-1}.
\end{align*}
The same argument holds for vertex-face neighborhoods $\omegavf$, replacing $\re$ with $\rf$, and edge-face neighborhoods, replacing $\re$ with $\rf$ and $\rv$ with $\re$.

Vertex-edge-face neighborhoods $\omegavef$:
Let $\mathbf{v}\in \mathcal{V}$, $\mathbf{e}\in \mathcal{E}$, $\mathbf{f}\in \mathcal{F}$ and $\omegavef$ be a fixed vertex-edge-face neighborhood.
On $\omegavef$, we have $r_{\partial\Omega} \sim \rf \leq \re  \leq \rv$.  Similarly to the previous cases, we obtain
\begin{align*}
&\|u \|_{H^1_{\mu}(\omegavef\cap S_{c\sigma ^L})} \simeq
  \|r_{\bff}^{\mu-1} u \|_{L^2(\omegavef\cap S_{c\sigma ^L})} + \sum_{\beta \in \N_0^3, |\beta| = 1}
  \|r_{\bff}^\mu D_{(\gperp, \gparperp, \gpar)}^{\beta}  u \|_{L^2(\omegavef\cap S_{c\sigma ^L})}  \\
   & \leq
    \|r_{\mathbf{f}}^{-t-s+\mu-1+t+s} u \|_{L^2(\omegavef\cap S_{c\sigma ^L})} +    \|r_{\mathbf{f}}^{-t-s+\mu+t+s-1}\rf  D_{\gperp} u \|_{L^2(\omegavef\cap S_{c\sigma ^L})} 
     \\ & \qquad \qquad +
   \|r_{\mathbf{f}}^{-t-s+\mu+t+s}\rf^{-1}\re  D_{\gparperp} u \|_{L^2(\omegavef\cap S_{c\sigma ^L})}  +
     \|r_{\mathbf{f}}^{-t-s+\mu+t+s}  \rf^{-1} \rv D_{\gpar} u \|_{L^2(\omegavef\cap S_{c\sigma ^L})}\\
  & \leq
  (c\sigma^L)^{\mu+t+s-1} \bigg( \|r_{\partial\Omega}^{-t-s} u \|_{L^2(\omegavef\cap S_{c\sigma ^L})} + \|r_{\partial\Omega}^{-t-s}\rf D_{\gperp} u \|_{L^2(\omegavef\cap S_{c\sigma ^L})} 
  \\ &  \qquad
        + \|r_{\partial\Omega}^{-t-s} r_{\mathbf{e}} D_{\gparperp} u \|_{L^2(\omegavef\cap S_{c\sigma ^L})} +  \|r_{\partial\Omega}^{-t-s} \rv  D_{\gpar} u \|_{L^2(\omegavef\cap S_{c\sigma ^L})}  \bigg)\\
  & \stackrel{\eqref{eq:weighted_regularity_vertexedgeface}}{\leq} C  (c\sigma^L)^{\mu+t+s-1}.
\end{align*}
%
We have thus obtained that  
\begin{equation*}
  \| (1-\cutoff) u \|_{H^1_\mu(\Omega)} 
  \leq  C \| u \|_{H^1_\mu(S_{c\sigma ^L})} 
  \leq C' \exp(-bL),
\end{equation*}
which concludes the proof.
\qed
\end{proof}

\subsection{Proof of \eqref{eq:hpExpConv}}
In this section, 
we prove exponential convergence of $hp$-FEM for \eqref{eq:FracLap}, \eqref{eq:analyticdata}.
\begin{proof}[of Theorem~\ref{thm:hpExpConv}]:
Let $K$  be an axis parallel rectangle and denote by $h_{\parallel, K}$, $h_{\parperp, K}$ and  $h_{\perp, K}$ the sizes of the rectangle $K$ in the coordinate directions $\{\gpar, \gparperp, \gperp\}$. 
To ease notation,
we oftentimes drop the subscript $|_{\cdot, K}$ in the following.

Combining \eqref{eq:error-decomp}, \eqref{eq:est-cutoff} and \eqref{eq:u-gu}, 
we obtain
\begin{align*}
      \inf_{v\in \VLq}\| u - v \|_{\tH^s(\Omega)}^2 
&\leq C \left(
\| \cutoff (u - \Pi^L_{q-1}) u\|_{H^1_\mu(\Omega)}^2  + \| (1-\cutoff) u \|_{\tH^s(\Omega)}^2 \right) \\
&\leq C \left( \| (u - \Pi^L_{q-1}) u\|_{H^1_\mu(\Omega \backslash \Omega^L_0)}^2 + \exp(-2bL) \right) \\
&\leq C \left(\sum_{K \in \Tg \backslash \lay_0} \| (u - \Pi^L_{q-1}) u\|_{H^1_\mu(K)}^2+ \exp(-2bL) \right) .
\end{align*}
We now estimate the approximation error elementwise using 
Theorem~\ref{thm:regularity} and Lemma~\ref{lemma:hat_Pi_1_infty}.

\underline{Step 1:}  [Anisotropic scaling argument in the weighted $H^1$-norm]
The key observation here is that, by construction, 
the geometric mesh has the property that for all $K \in \Tg \backslash \lay_0$ there holds
\begin{align*}
\rv \simeq h_\parallel, \quad \re \simeq h_\parperp, \quad \rf \simeq h_\perp, \qquad r_{\partial\Omega} \simeq h_\perp.
\end{align*}
Using the element transformation $F_K$ such that the coordinates $\{\gpar, \gparperp, \gperp\}$ 
are rotated to the canonical coordinates and recalling $\widehat u = u \circ F_K$, 
we obtain by anisotropic scaling
\begin{align}\label{eq:aniso_scaling}
&\|  r_{\partial\Omega}^{\mu-1}(u - \Pi^L_q u) \|^2_{L^2(K)}
+
\| r_{\partial\Omega}^\mu\nabla(u - \Pi^L_qu) \|^2_{L^2(K)}
\nonumber \\ &\qquad  \leq C
    h_\perp^{2\mu}
      \bigg(   \frac{h_\parallel  h_\parperp}{h_\perp}  \left( \| \hu -
        \hPi_q \hu \|^2_{L^2(\Kh)} + \| \hpartial_z(\hu-\hPi_q\hu)\|^2_{L^2(\Kh)}
      \right) 
  \nonumber    \\ 
&\qquad\quad +  \frac{h_\parallel h_\perp }{h_\parperp} \| \hpartial_y (\hu-\hPi_q\hu)\|^2_{L^2(\Kh)} +  \frac{h_\parperp h_\perp }{h_\parallel } \| \hpartial_x (\hu-\hPi_q\hu)\|^2_{L^2(\Kh)} \bigg).
    \end{align}

\underline{Step 2:} [Approximation on elements inside subregions] 
Let
$K \in \Tg \backslash \lay_0$ and assume $K \subset\omega_{\bullet}^\xi$ with $\bullet \in \{\bfv,\bfe,\bff,\bfv\bfe,\bfv\bff,\bfe\bff,\bfv\bfe\bff\} \cup\{\Omega_{\rm int}\}$. Then, as in \eqref{eq:aniso_scaling}, there holds the scaling estimate for the weighted regularity in vertex-edge-face neighborhoods (the other neighborhoods produce essentially the same estimates)
\begin{align*}
 h_\perp^{1/2-t-s}h_\parperp^{1/2}h_\parallel^{1/2}  \|\hpartial^\beta \hu\|_{L^2(\widehat K)} \simeq    \| r_{\partial \Omega}^{-t-s} r_{\bfv}^{\betapar} r_{\bfe}^{\betaparperp} r_{\bff}^{\betaperp} 
             D_{(\gperp, \gparperp, \gpar)}^{\beta}u \|_{L^2(K)} \leq C \gamma^{\betam}\betam^{\betam}. 
\end{align*}
Together with the Sobolev embedding of $H^2(\Omega)$ into $L^\infty(\Omega)$, this gives the existence of $\widetilde C,\widetilde \gamma >0$, such that
\begin{align*}
\|\hpartial^\beta \hu\|_{L^\infty(\widehat K)} \leq \widetilde C  h_\perp^{-1/2+t+s}h_\parperp^{-1/2}h_\parallel^{-1/2}   \widetilde{\gamma}^{\betam+1}\betam^{\betam}. 
\end{align*}
Now, using Lemma~\ref{lemma:hat_Pi_1_infty} in \eqref{eq:aniso_scaling}, we obtain
\begin{align}\label{eq:aniso_approx}
&\|  r_{\partial\Omega}^{\mu-1}(u - \Pi^L_q u) \|^2_{L^2(K)}
+
\| r_{\partial\Omega}^\mu\nabla(u - \Pi^L_qu) \|^2_{L^2(K)}
\nonumber \\ &\qquad\qquad  \leq C
    h_\perp^{2\mu+2t+2s} \left( h_\parallel^{-2} + h_\parperp^{-2} + h_\perp^{-2}  \right) \exp(-b|\beta|).
\end{align}

\underline{Step 3:} [Treatment of elements intersecting multiple neighborhoods] 
Let
$K \in \Tg \backslash \lay_0$ with $K  \not\subset \omega_{\bullet}^\xi$ for $\bullet \in \{\bfv,\bfe,\bff,\bfv\bfe,\bfv\bff,\bfe\bff,\bfv\bfe\bff\} \cup\{\Omega_{\rm int}\}$.  
Then, by adjusting $\zeta$ (dependent on $\sigma$, but independent on $L$), we may find a neighborhood $\omega_{\bullet}^{\widetilde \xi}$ such that $K \subset \omega_{\bullet}^{\widetilde \xi}$. Consequently, we can employ Theorem~\ref{thm:regularity} for $\omega_{\bullet}^{\widetilde \xi}$  and again obtain estimate \eqref{eq:aniso_approx}. 

\underline{Step 4:} [Global estimates by summation of scaled local bounds] 
By construction of the geometric mesh, 
we always have $h_\perp \leq h_\parperp \leq h_\parallel$. 
Thus, we can choose $\mu <1-s$ and $t < 1/2$ such that $\mu + t >1-s$, which implies
\begin{align*}
   h_\perp^{2\mu+2t+2s} \left( h_\parallel^{-2} + h_\parperp^{-2} + h_\perp^{-2}  \right) \leq h_\perp^{\delta}
\end{align*} 
for some $\delta >0$. 

By symmetry of the unit cube and the constructed mesh, it suffices to do the summation process on the subcube $[0,\frac{1}{2}]^3$. This subcube can be further decomposed into 6 neighborhoods of vertex-edge-face type separating the boundary faces $\{x=0\}, \{y=0\}, \{z=0\}$ (see Figure~\ref{fig:vertex-notation}, where $\xi$ is suitable chosen such that no other neighborhoods are present). The element sizes in perpendicular direction of elements intersecting one of those neighborhoods $\omega_{\mathbf{vef}}$ can be bounded by
\begin{align*}
\sum_{T \in \Tg \backslash \lay_0,T\cap\omega_{\mathbf{vef}} \neq \emptyset}h_{\perp,K}^{\delta} &\leq C\sum_{i=1}^L \sum_{j=i}^L\sum_{k=j}^L \sigma^{\delta k} = C \sigma^{\delta}\frac{(1-\sigma^{\delta L})^3}{(1-\sigma^\delta)^3} 
\\ 
&\lesssim \frac{1}{(1-\sigma^\delta)^3}  =: C_{\sigma,\delta}.
\end{align*}
Thus, we obtain
\begin{align*}
\sum_{K \in \Tg \backslash \lay_0} \| (u - \Pi^L_{q-1}) u\|_{H^1_\mu(K)}^2 
\lesssim \sum_{K \in \Tg \backslash \lay_0}h_{\perp,K}^\delta \exp(-b|\beta|) 
\leq \widetilde C_\sigma \exp(-b|\beta|).
\end{align*}
\end{proof}

\section{Numerical example}
\label{sec:numerics}
The following numerical example 
indicates sharpness of \eqref{eq:hpExpConv} and 
gives quantitative bounds on the dependence of the constant $b$ in \eqref{eq:hpExpConv} on $\sigma$.

We choose $f=1$, $s = 0.25$ 
and employ geometric meshes with different grading parameters $\sigma$ and 
$L$ layers of refinement as well as $hp$-FEM spaces with piecewise polynomials of degree $q=L$. 
We estimate the energy norm error $\sqrt{a(u - u_N,u - u_N)} = \sqrt{a(u,u) - a(u_N,u_N)}$ 
with an extrapolation estimate of $a(u,u)$.

In Figure~\ref{fig:cube-sigma-convergence}, we observe, in accordance with our main result, exponential convergence with respect to the number of layers $L$. 

We note that the stiffness matrix is computed using high precision quadrature formulas based on Duffy transformations to take care of singularities, which is rather expensive in three dimensions. More remarks on implementation and a precise description and analysis of the quadrature rules will be given in a forthcoming paper, \cite{BFM26}.

\begin{figure}
\begin{center}
\begin{tikzpicture}
  \begin{axis}[
    width=0.8\textwidth,
    height=5.5cm,
    ymode=log,
    grid=major,
    grid style={dashed, gray!30},
    axis background/.style={fill=white},
    xlabel={number of layers $L$},
    ylabel={energy norm error},
    xtick={1,2,3},
    legend cell align={left},
    legend style={at={(0.02,0.02)},anchor=south west,font=\footnotesize},
  ]

    \addplot [blue, thick, mark=*]
      table [x=L, y=sigma075, col sep=comma]
      {data_sigma_convergence.txt};
    \addlegendentry{$\sigma = 0.75$}

    \addplot [red, thick, mark=square*]
      table [x=L, y=sigma05, col sep=comma]
      {data_sigma_convergence.txt};
    \addlegendentry{$\sigma = 0.5$}

    \addplot [green!60!black, thick, mark=triangle*]
      table [x=L, y=sigma025, col sep=comma]
      {data_sigma_convergence.txt};
    \addlegendentry{$\sigma = 0.25$}

    \addplot [orange, thick, mark=diamond*]
      table [x=L, y=sigma0172, col sep=comma]
      {data_sigma_convergence.txt};
    \addlegendentry{$\sigma = 0.172$}
    
    \addplot [black, thick, dashed, domain=1:3, samples=100]
      {2.8*pow(0.172, x/2)} node [below,xshift=-0.0cm,yshift=0.05cm] {$0.172^{L/2}$};

    \addplot [black, thick, dashed, domain=1:3, samples=100]
     {2.8*pow(0.5, x/2)} node [below,xshift=-0.0cm,yshift=0.05cm] {$0.5^{L/2}$};



  \end{axis}
\end{tikzpicture}
\caption{
Exponential convergence of $hp$-FEM for \eqref{eq:FracLap} with $f\equiv 1$,
$L=1,2,3$ and $\sigma\in \{(\sqrt{2}-1)^2 = 0.172...,0.25,0.5,0.75\}$.}  
\label{fig:cube-sigma-convergence}
\end{center}
\end{figure}
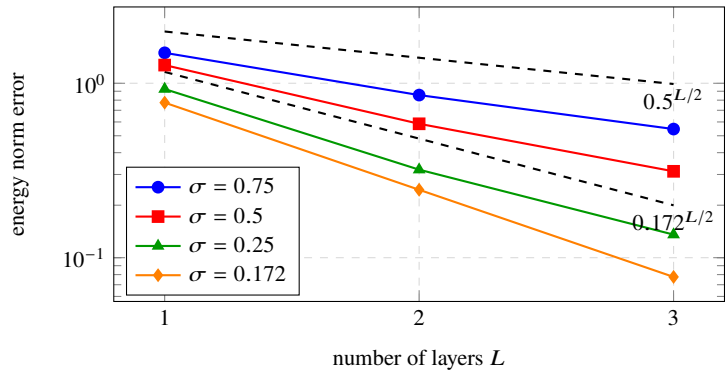
\section{Conclusion}
\label{sec:Concl}
For the homogeneous Dirichlet problem of the 
integral fractional Laplacian in a cube $(0,1)^3$ with analytic in $[0,1]^3$ forcing,
we developed a proof of (root-)exponential convergent $hp$-FE approximations 
in the corresponding fractional Sobolev ``energy'' norm.
The proof is based on the weighted, analytic regularity of solutions in $[0,1]^3$
proved in \cite{FMMS25-regularity3d}, 
and on a tensor product, GLL interpolant of the solution $u$.
Consequences of main result are (root-)exponential convergence rates for 
a) quantized, tensor-structured approximations as developed in \cite{CMarMRChS22},
b) neural-network based approximation as developed in \cite{MOPS23}.
Extension of the present convergence rate bound
to general, polyhedral domains $\Omega \subset \R^3$ 
shall be developed in \cite{FMMSfuture}.
\begin{acknowledgement}
Research of JMM supported by the Austrian Science Fund (FWF) project F 65.
\end{acknowledgement}
\vspace{-4mm}

\bibliographystyle{plain}
\bibliography{bibliography}

\end{document}